%
%
%
%
%
\RequirePackage{fix-cm}
\documentclass[smallextended]{svjour3}       
\smartqed  
\usepackage{graphicx}
 \usepackage{mathptmx}      
%

\usepackage{amssymb,amsmath}
\usepackage{amsmath,amscd}
\usepackage{amsthm}
\usepackage{overpic}
\usepackage{color}
\usepackage{standalone}
\usepackage{stmaryrd}

\usepackage{mathtools}
\usepackage{amsfonts}
\usepackage{mathrsfs}
\usepackage{tikz}
\usepackage[utf8]{inputenc}
\usepackage{graphicx}
\usepackage{comment}

%





\DeclareMathOperator{\skel}{\text{skel}_1}

\DeclarePairedDelimiter\floor{\lfloor}{\rfloor}

\newcommand{\N}{\mathbb{N}}
\newcommand{\Z}{\mathbb{Z}}

\newcommand{\R}{\mathbb{R}}

\newcommand{\al}{\alpha}

\newcommand{\G}{\Gamma}
\newcommand{\g}{\gamma}


%
%
\begin{document}

\title{Embedding Grid Graphs on Surfaces}


\author{Christian Millichap       \and
        Fabian Salinas
}


\institute{Christian Millichap \at
              Department of Mathematics, Furman University, Greenville, SC 29613 \\
              \email{Christian.Millichap@furman.edu}           
           \and
           Fabian Salinas \at
          Spartanburg, SC 29372 \\ 
	  \email{fabiansalinas125@gmail.com}
}

\date{Received: date / Accepted: date}

\maketitle

\begin{abstract}
In this paper, we analyze embeddings of grid graphs on orientable surfaces. We determine the genus of two infinite classes of $3$-dimensional grid graphs that do not admit quadrilateral embeddings and effective upper bounds for the genus of any $3$-dimensional grid graph, both in terms of a grid graph's combinatorics. As an application, we provide a complete classification of planar and toroidal grid graphs. Our work requires a variety of combinatorial and graph theoretic  arguments to determine effective lower bounds on the genus of a grid graph, along with explicitly constructing embeddings of grid graphs on surfaces to determine effective upper bounds on their genera. 
\keywords{Grid Graphs \and Quadrilateral Embeddings \and Genus of a Graph \and Planar Graphs \and Toroidal Graphs}
\subclass{05C10 \and 57M15}
\begin{flushleft}
\textbf{Acknowledgment:} The Version of Record of this article is published in Graphs and Combinatorics, and is available online at https://link.springer.com/article/10.1007/s00373-022-02488-w
\end{flushleft}
\end{abstract}

\section{Introduction}
\label{sec:intro}

Define a \textbf{path of length $\alpha$} $\in 
\mathbb{N}$  as the graph $P_\al$ that has  vertex set $V= \{v\in \mathbb{N}: 0\leq v\leq \alpha\}$ and where two vertices determine an edge if and only if $|v_i - v_j| =1$ for $v_i, v_j \in V$.  A \textbf{$k$-dimensional grid graph} $G(\al_1,\ldots,\al_k)$ for $k\in \N$, is the graph Cartesian product 
$$G(\al_1,\ldots,\al_k) = P_{\al_1} \boxempty \cdots \boxempty P_{\al_k},$$
where $\al_i \in \N$ for index $1\leq i \leq k$.  We refer to the set $\{ \al_i \}_{i=1}^{k}$ as the \textbf{grid parameters} of $G(\al_1,\ldots,\al_k)$.  We  use $G(\ast)$ to denote a grid graph without specific grid parameters.  A graph $G$ \textbf{embeds} on a closed connected orientable surface of genus $g$, denoted $S_{g}$,  if there exists a continuous and one-to-one map $\phi: G \rightarrow S_{g}$. When it is obvious from context, we will let $G$ also stand for its image $\phi(G) \subset S_{g}$.    The \textbf{genus} of a graph $G$, denoted $\gamma(G)$, is the smallest $g$ such that $G$ embeds on $S_{g}$. Here, we are interested in studying embeddings of grid graphs and the genera of grid graphs.  

The genus of a grid graph $G(\ast)$ can be immediately determined when $G(\ast)$ is known to have a \textbf{quadrilateral embedding}, that is, an embedding where every face of $G(\ast)$ is bound by a $4$-cycle. Graphs admitting quadrilateral embeddings have been well studied; see \cite{AbPa1983}, \cite{An1981}, \cite{Pi1980}, \cite{Pi1982}, \cite{Pi1989}. In particular, White in \cite[Theorem 4]{Wh1970} proved that any grid graph $G(\ast)$ with at least three odd grid parameters admits a quadrilateral embedding, and so, determines the genus of $G(\ast)$. However, White's work does not address the infinite class of grid graphs that admit at most two odd grid parameters. This raises a number of questions. Do such grid graphs admit quadrilateral embeddings? If not, then can we determine their genera? In this paper, we focus on addressing these questions for $3$-dimensional grid graphs.  

We now provide an outline of our major results. In Proposition \ref{prop:3dcase3}, we detail an explicit construction used to find an upper bound on the genus of any $3$-dimensional grid graph with some even grid parameters, i.e., not covered by White's work. We then determine the genera for two infinite classes of $3$-dimensional grid graphs where the bound from Proposition \ref{prop:3dcase3} is sharp; see Theorem \ref{thm:genus} and Theorem \ref{thm:Da22}.  In addition, we show that any grid graph (with some even grid parameters) covered by these theorems, does not admit a quadrilateral embedding. Lower bounds for the the genera of the grid graphs discussed in Theorem \ref{thm:genus} and Theorem \ref{thm:Da22} are determined using block decompositions and finding complete bipartite graph minors in Section \ref{subsec:lowerbounds}. In Section \ref{sec:GenusRange}, we make a quick observation about the maximum genus of a grid graph, which is defined in that section. As a corollary of our work, we classify exactly which grid graphs are \textbf{planar}, i.e., embed on the plane and \textbf{toroidal}, i.e., embed on the torus, in Section \ref{sec:planarandtoroidal}.


\section{Background}
\label{sec:BlockandGenus}

To start, we review some  essential properties of grid graphs and graph embeddings. The following lemma is a basic application of well known vertex and edge counting formulas for Cartesian products of graphs. See \cite{ImKlRa2008} for more background on Cartesian products of graphs. We use $|V(G)|$ to denote the number of vertices in a graph $G$ and $|E(G)|$ to denote the number of edges in a graph $G$. 

\begin{lemma}
\label{prop:verticesforgrids}
Let $G(\al_1,\ldots,\al_k)$ be a $k$-dimensional grid graph. Then we have:
\begin{enumerate}
\item $|V(G(\al_1,\ldots,\al_k))| = \displaystyle\prod_{i=1}^{k} (\alpha_{i} +1).$ \vspace{0.1in}
\item $|E(G(\al_1,\ldots,\al_k))| =  \left(\displaystyle\prod_{i=1}^{k} (\alpha_{i} +1)\right) \left(\displaystyle\sum_{i=1}^k \frac{\al_i}{\al_i+1} \right).$
\end{enumerate}
\end{lemma}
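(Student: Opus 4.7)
The plan is to reduce both identities to the standard vertex- and edge-counting formulas for the Cartesian product of graphs, combined with a straightforward induction on $k$. Recall that for any two graphs $G$ and $H$, one has $|V(G\boxempty H)| = |V(G)|\cdot |V(H)|$ and $|E(G\boxempty H)| = |E(G)|\cdot |V(H)| + |V(G)|\cdot |E(H)|$. Iterating the first of these and using that $|V(P_\alpha)| = \alpha+1$ (directly from the definition given in the introduction) immediately yields part (1): $|V(G(\alpha_1,\dots,\alpha_k))| = \prod_{i=1}^k (\alpha_i+1)$. This step is essentially bookkeeping, so I would dispatch it in one line.

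For part (2), I would first iterate the edge formula. A short induction on $k$ gives
$$|E(G_1 \boxempty \cdots \boxempty G_k)| = \sum_{i=1}^k |E(G_i)| \prod_{j \neq i} |V(G_j)|,$$
since the Cartesian product is associative. Specializing to $G_i = P_{\alpha_i}$, using $|V(P_{\alpha_i})| = \alpha_i+1$ and $|E(P_{\alpha_i})| = \alpha_i$, this becomes
$$|E(G(\alpha_1,\dots,\alpha_k))| = \sum_{i=1}^k \alpha_i \prod_{j \neq i}(\alpha_j+1).$$

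The final step is a purely algebraic rearrangement: multiplying and dividing the $i$th summand by $(\alpha_i+1)$ factors out the full product $\prod_{i=1}^k (\alpha_i+1)$, leaving
$$|E(G(\alpha_1,\dots,\alpha_k))| = \left(\prod_{i=1}^k (\alpha_i+1)\right)\sum_{i=1}^k \frac{\alpha_i}{\alpha_i+1},$$
which is exactly part (2). There is no real obstacle here; the only thing to be careful about is the inductive step for the edge formula, which I would write out cleanly by splitting edges of $G_1 \boxempty \cdots \boxempty G_k$ into those whose endpoints differ only in the last coordinate (contributing $|V(G_1 \boxempty \cdots \boxempty G_{k-1})|\cdot |E(G_k)|$) and those whose endpoints agree in the last coordinate (contributing $|E(G_1 \boxempty \cdots \boxempty G_{k-1})|\cdot |V(G_k)|$), then applying the inductive hypothesis.
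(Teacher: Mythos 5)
Your proposal is correct and follows exactly the route the paper itself indicates: the paper states this lemma without a written proof, remarking only that it is "a basic application of well known vertex and edge counting formulas for Cartesian products of graphs," which are precisely the product formulas $|V(G \boxempty H)| = |V(G)|\,|V(H)|$ and $|E(G \boxempty H)| = |E(G)|\,|V(H)| + |V(G)|\,|E(H)|$ that you iterate and then rearrange algebraically. Your write-up simply supplies the details the paper omits, so there is nothing to fix.
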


 Lemma \ref{prop:verticesforgrids} provides a helpful dictionary which will be used in stating a number of our results on embeddings of grid graphs. An embedding of a graph $G$ on a surface $S$ is a \textbf{$2$-cell embedding} if each component of $S \setminus G$ is homeomorphic to an open disk, in which case, we say $G$ \textbf{2-cell embeds} on $S$. Note that, any $2$-cell embedding is also an embedding, though the converse isn't necessarily true.  We say that an embedding ($2$-cell embedding) of $G$ on $S_{g}$ is \textbf{minimal} if $\gamma(G) = g$.  We do not distinguish  between minimal embeddings and  minimal $2$-cell embeddings since the work of Youngs \cite{Yo1963} shows that each minimal embedding is a $2$-cell embedding. If $G$ $2$-cell embeds on a surface $S_{g}$, then it determines a $2$-cell decomposition of $S_{g}$ into vertices ($V$), edges ($E$), and faces ($F$), where $V$ and $E$ correspond with the vertices and edges of $G$, respectively, and each face is a component of $S_g \setminus G$. Given such a $2$-cell embedding,  the Euler characteristic of $S_g$ is exactly $\chi(S_g) = |V| - |E| + |F|$. 
 
 The following proposition and corollary motivate why we first consider the genera of quadrilateral grid graphs. Both results are certainly well known but we include them here for completeness.  Recall that the \textbf{girth} of a graph $G$ (that is not a tree) is the length of its shortest cycle.

\begin{proposition}
\label{prop:Eulergirth}
Suppose $G$ is a  connected graph whose girth is at least $4$. Then $$\gamma(G) \geq 1 + \frac{|E|}{4} - \frac{|V|}{2}.$$ 
\end{proposition}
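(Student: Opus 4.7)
The plan is to combine Euler's formula with a standard face-edge counting inequality that exploits the girth condition.

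First I would take any minimal embedding of $G$ on $S_g$ where $g = \gamma(G)$. The excerpt already notes (via Youngs' theorem) that every minimal embedding is automatically a $2$-cell embedding, so the Euler characteristic identity $|V| - |E| + |F| = \chi(S_g) = 2 - 2g$ applies to this embedding.

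Next I would bound $|F|$ from above using the girth hypothesis. In a $2$-cell embedding, each face is bounded by a closed walk in $G$; since $G$ has girth at least $4$, every such boundary walk has length at least $4$. Counting edge–face incidences two different ways — each edge lies on the boundary of at most two faces (or contributes twice to a single face) — gives $4|F| \leq \sum_{f \in F} \deg(f) = 2|E|$, hence $|F| \leq |E|/2$.

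Finally I would plug this into Euler's formula: $2 - 2g = |V| - |E| + |F| \leq |V| - |E| + |E|/2 = |V| - |E|/2$, and rearranging yields $g \geq 1 + |E|/4 - |V|/2$, as required.

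There is no real obstacle here; the only subtlety worth stating explicitly in the write-up is the appeal to Youngs' theorem to guarantee that the face count is controlled (without the $2$-cell property, one cannot apply Euler's formula face-by-face), and the observation that the girth bound applies to face boundary walks rather than simple cycles — which is fine because a closed walk in a graph of girth $\geq 4$ still has length $\geq 4$ (a closed walk of length $1$ would be a loop, and of length $2$ or $3$ would imply a cycle of that length).
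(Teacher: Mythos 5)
Your overall strategy is the same as the paper's: take a minimal embedding, invoke Youngs' theorem to get a $2$-cell embedding, bound $|F|$ by $|E|/2$ via a girth-based count of edge--face incidences, and finish with Euler's formula. The counting identity $\sum_{f} \deg(f) = 2|E|$ and the final rearrangement are fine. The gap is in your parenthetical justification that every face boundary walk has length at least $4$. The claim that ``a closed walk of length $2$ or $3$ would imply a cycle of that length'' is false for length $2$: in a simple graph, a closed walk of length $2$ can traverse a single edge forward and then backward, producing no $2$-cycle at all. Moreover, such walks genuinely occur as face boundaries of $2$-cell embeddings: for the graph $K_2$ (a single edge) embedded in the sphere, the unique face has boundary walk of length exactly $2$, and more generally every bridge of a graph is traversed twice by one and the same face. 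So girth alone does not bound face walk lengths from below; length-$3$ walks are excluded by girth (in a simple graph a closed walk of length $3$ must be a triangle), but length-$2$ walks are not.

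To close the gap you must use connectivity together with the fact that $G$ actually contains a cycle (implicit in ``girth at least $4$''). This is precisely where the paper spends most of its proof: it shows that the subgraph induced by each face boundary walk contains a cycle, because otherwise that subgraph would be a tree, the embedding would then have only one face, and $G$ would have to equal that tree --- contradicting that $G$ has a cycle. Once every face boundary contains a cycle, the girth hypothesis supplies at least $4$ distinct edges per face, and the count $4|F| \leq 2|E|$ goes through. An alternative repair within your framework: a face boundary walk of length $2$ forces, by the face-tracing rules of a rotation system, both endpoints of the traversed edge to have degree $1$, so the connected graph $G$ would be $K_2$, again contradicting the existence of a cycle. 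Without some such argument the inequality $|F| \leq |E|/2$ is not established, so as written your proof has a genuine (though easily repaired) hole at exactly the delicate point of the argument.
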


\begin{proof}
Suppose $G$ $2$-cell embeds on a surface $S_g$, for some $g \in \mathbb{N}_0$. The boundary of a face of this $2$-cell embedding is a closed walk $W$ in $G$. We claim that $H$, the subgraph of $G$ induced by $W$, contains a cycle. To see this, note that if $H$ did not contain a cycle, then $H$ would be a tree. In this case, $G=H$ since our $2$-cell embedding could only have one face if $H$ is a tree. However, this violates the assumptions that $G$ has a cycle since it has nontrivial girth. Thus, the boundary of each face in this $2$-cell embedding contains a subgraph that is a cycle, which must contain at least $4$ edges based on the girth of $G$.  At the same time, each edge of  this $2$-cell embedding appears on the boundary of at most two faces. Thus, $|E| \geq \frac{4|F|}{2} = 2|F|$, and so, $|F| \leq \frac{|E|}{2}$. Then
\begin{equation*}
2-2g = \chi(S_g)  = |V| - |E| + |F| \leq |V| - |E| + \frac{|E|}{2} = |V| - \frac{|E|}{2},
\end{equation*}   
 This implies that $g \geq 1 + \frac{|E|}{4} - \frac{|V|}{2}$ and since this is true for any $2$-cell embedding of $G$ on a closed connected orientable surface, we have that $\gamma(G) \geq 1 + \frac{|E|}{4} - \frac{|V|}{2}.$
\end{proof}

\begin{corollary}
\label{cor:Eulergirth}
Let $G(\al_1, \ldots, \al_k)$ be a grid graph with $k>1$. Then  $$\gamma(G(\al_1, \ldots, \al_k)) \geq  1 + \frac{|E|}{4} - \frac{|V|}{2} =    1 + \frac{1}{2}\prod_{i=1}^{k} (\alpha_{i} +1)\Big[ \frac{1}{2}\sum_{i=1}^{k} \frac{\al_i}{\al_i +1} -1 \Big]$$ 
We have equality in the above formula if and only if $G(\al_1, \ldots, \al_k)$ admits a quadrilateral embedding.
\end{corollary}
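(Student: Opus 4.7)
The plan is to apply Proposition \ref{prop:Eulergirth} directly and then substitute the vertex and edge counts from Lemma \ref{prop:verticesforgrids}. First I would verify the girth hypothesis: any grid graph is bipartite (it is a Cartesian product of paths, or equivalently one may use the parity of $\sum v_i$ as a $2$-coloring), so every cycle has even length; and for $k>1$ a $4$-cycle is realized inside any two-dimensional subproduct $P_{\al_i} \boxempty P_{\al_j}$ with $\al_i,\al_j \geq 1$. Thus the girth equals $4$ and Proposition \ref{prop:Eulergirth} applies to give $\gamma(G(\al_1,\ldots,\al_k)) \geq 1 + |E|/4 - |V|/2$.

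Next I would do the algebra. Substituting the formulas from Lemma \ref{prop:verticesforgrids} yields
\begin{equation*}
\frac{|E|}{4} - \frac{|V|}{2} = \frac{1}{4}\prod_{i=1}^{k}(\al_i+1)\sum_{i=1}^{k}\frac{\al_i}{\al_i+1} - \frac{1}{2}\prod_{i=1}^{k}(\al_i+1) = \frac{1}{2}\prod_{i=1}^{k}(\al_i+1)\left[\frac{1}{2}\sum_{i=1}^{k}\frac{\al_i}{\al_i+1} - 1\right],
\end{equation*}
and adding $1$ recovers the stated expression.

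For the equality characterization I would revisit the inequality chain in the proof of Proposition \ref{prop:Eulergirth}: equality in $|E| \geq 2|F|$ forces every face boundary in a minimal embedding to consist of exactly $4$ edges. Because grid graphs with $k>1$ are $2$-connected, face boundaries are honest cycles rather than general closed walks, so each such face is bounded by a $4$-cycle---precisely a quadrilateral embedding. Conversely, given any quadrilateral embedding on some $S_g$, counting incidences around faces gives $|F| = |E|/2$, so Euler's formula yields $g = 1 + |E|/4 - |V|/2$; this matches the lower bound, forcing $\gamma(G) = g$. The only mildly subtle point is justifying that a face boundary of length $4$ is a $4$-cycle, which is where $2$-connectivity of grid graphs with $k>1$ enters; this is the one place I expect to include a brief separate justification, and otherwise the argument is a direct computation from the preceding two results.
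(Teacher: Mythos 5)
Your overall route is the same as the paper's: verify that the girth is exactly $4$, apply Proposition \ref{prop:Eulergirth} together with Lemma \ref{prop:verticesforgrids}, and then extract the equality characterization from the counting inside the proof of Proposition \ref{prop:Eulergirth}. Your bipartiteness argument for the girth (parity of $\sum v_i$ as a $2$-coloring) is a perfectly good substitute for the paper's coordinate-by-coordinate exclusion of $3$-cycles, and the algebra is identical.

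However, the step you yourself flagged as ``mildly subtle'' is wrong as justified. The statement that $2$-connectivity forces face boundaries to be cycles is a theorem about embeddings in the plane (equivalently, the sphere); it fails on surfaces of positive genus, which is precisely the relevant setting here, since the minimal embeddings of the grid graphs in question generally have positive genus. Concretely, $K_4$ is $3$-connected and admits a $2$-cell embedding on the torus with $|F| = 2$; each facial walk then has length $6$, and a closed walk of length $6$ in a graph with only $4$ vertices cannot be a cycle. So you cannot pass from ``face boundary with exactly $4$ edges'' to ``$4$-cycle'' via connectivity alone. The repair is already available in the proof of Proposition \ref{prop:Eulergirth}, and it is what the paper uses: there it is shown that every facial walk contains a cycle, and that cycle has length at least the girth, namely $4$. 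In the equality case, the incidence count forces every face to have exactly $4$ distinct boundary edges and every edge to lie on exactly two distinct faces, so each facial walk traverses its $4$ edges once each; a closed walk on $4$ edges that contains a cycle of length at least $4$ is exactly that $4$-cycle. With that substitution (girth plus cycle-containment, rather than $2$-connectivity), your forward direction goes through; your converse is fine as written, modulo the standing conventions that quadrilateral embeddings are $2$-cell and that the minimal embedding you start from is $2$-cell by Youngs' theorem, as recorded in Section \ref{sec:BlockandGenus}.
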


\begin{proof}
First, we show that the girth of a $k$-dimensional grid  graph is exactly 4 whenever $k>1$. It is easy to see that any such grid graph $G(\ast) = G(\al_1, \ldots, \al_k)$ with $k >1$ contains a subgraph isomorphic to $G(1,1)$, and so, $G(\ast)$ will always contain a $4$-cycle. In addition, any grid graph is a simple graph (contains no loops or multi-edges), so we only need to show that $G(\ast)$ has no $3$-cycles. Suppose $G(\ast)$ has a $3$-cycle determined by the vertices $v_i \in V(G(\ast))$ for $i=1,2,3$ with edges connecting each pair of vertices.  Note that, there exists an edge between two vertices in a grid graph if and only if the corresponding $k$-tuples differ by one in one component and are equal in the rest. Thus,  the $k$-tuples representing $v_1$ and $v_2$ differ by one in exactly one grid parameter and are equal in the rest of their respective grid parameters. Similarly for the pair $v_1$ and $v_3$ and the pair $v_2$ and $v_3$. However, this is not possible if $v_1$, $v_2$, and $v_3$ are all distinct vertices. Thus, the girth of $G(\ast)$ is at least 4. 

Now, the desired inequality in the first statement immediately follows  from combining Proposition \ref{prop:Eulergirth} with the vertex and edge counting formulas from Lemma \ref{prop:verticesforgrids}. 

The second statement essentially follows from the proof of Proposition \ref{prop:Eulergirth}. First, note that every edge in a grid graph occurs on a cycle. Then the inequality $|F| \leq \frac{|E|}{2}$ in the proof of Proposition \ref{prop:Eulergirth} becomes an equality if and only if every face in a $2$-cell decomposition is bound by a $4$-cycle. 
\end{proof}

 Corollary \ref{cor:Eulergirth} shows that in some sense, quadrilateral grid graphs realize the smallest possible genus for any such embedding of a grid graph relative to the number of vertices and edges of that grid graph.  The following theorem shows that many grid graphs admit quadrilateral embeddings, and so, we can determine their respective genera. This result was first proved by White  in \cite[Theorem 4]{Wh1970}. Here, we have rephrased his work  in terms of our notation.

\begin{theorem}\cite[Theorem 4]{Wh1970}
\label{thm:kdimgenus1}
Let $G(\al_1,  \ldots, \al_k)$ be a $k$-dimensional grid graph where $k \geq 3$ and at least three $\al_i$ are odd, for $i =1, \ldots, k$. Then $G(\al_1,  \ldots, \al_k)$ admits a quadrilateral embedding, i.e., 
$$\gamma(G(\al_1,  \ldots, \al_k))  = 1 + \frac{|E|}{4} -\frac{|V|}{2}   = 1 + \frac{1}{2}\prod_{i=1}^{k} (\alpha_{i} +1)\Big[ \frac{1}{2}\sum_{i=1}^{k} \frac{\al_i}{\al_i +1} -1 \Big].$$
\end{theorem}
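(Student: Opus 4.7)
The plan is to separate the theorem into a lower and an upper bound. By Corollary \ref{cor:Eulergirth}, the quantity $1 + |E|/4 - |V|/2$ is already a lower bound for $\gamma(G(\al_1,\ldots,\al_k))$, and it is attained precisely when $G(\al_1,\ldots,\al_k)$ admits a quadrilateral embedding. Hence the entire content of the theorem reduces to a single task: produce an explicit quadrilateral $2$-cell embedding of $G(\al_1,\ldots,\al_k)$ under the hypothesis that at least three of the $\al_i$ are odd.

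To build the embedding, I would pass to the combinatorial description of $2$-cell embeddings via rotation systems. A $2$-cell embedding on an orientable surface corresponds to a choice of cyclic ordering of the incident edges at each vertex, and the faces are recovered by the standard face-tracing rule. Under this translation, constructing a quadrilateral embedding becomes the problem of exhibiting a rotation system on $G(\al_1,\ldots,\al_k)$ whose every face-tracing closed walk has length exactly four.

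My concrete strategy is to handle the three-dimensional case first and then induct on $k$. At each vertex $(i_1,\ldots,i_k)$, label the incident edges by their directions $\pm 1,\ldots,\pm k$. In the $2$-dimensional grid the uniform rotation $(+1,+2,-1,-2)$ already produces quadrilateral faces, but a direct check shows that no constant rotation works for $k \geq 3$. I would instead define the rotation at each vertex by a parity-dependent modification of the canonical order, flipping certain entries according to the parities of the coordinates $i_j$. The three-odd-parameter hypothesis is precisely what supplies enough independent parity constraints to make such a sign pattern globally consistent and to cancel the obstructions that arise when closing up face-tracing walks. For the step from $k-1$ to $k$, I would stack $\al_k + 1$ copies of a quadrilateral embedding of $G(\al_1,\ldots,\al_{k-1})$ and connect consecutive layers by edges in the new direction $\pm k$, inserting these edges into the rotation at each vertex so that every old $4$-face splits into two new $4$-faces.

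The main obstacle is the face-tracing verification under a non-uniform rotation with parity-dependent signs: checking that every closed walk around a face has length exactly four is a delicate case analysis, and this bookkeeping is the technical core of White's original argument in \cite{Wh1970}.
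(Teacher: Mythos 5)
You should first be aware that the paper does not actually prove this statement: Theorem \ref{thm:kdimgenus1} is imported verbatim from White \cite[Theorem 4]{Wh1970}, and the only case the paper proves on its own is the $3$-dimensional all-odd case (Proposition \ref{prop:3dalloddgenus}), which it handles geometrically — realizing the grid graph as the $1$-skeleton of the boundary surface of a union of unit cubes in $\mathbb{R}^3$ — rather than combinatorially via rotation systems. So your proposal is a blind attempt at White's theorem itself. Your opening reduction via Corollary \ref{cor:Eulergirth} is correct: the whole content is the construction of a quadrilateral embedding.

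The inductive step you describe, however, has a genuine gap: if every old quadrilateral face ``split into two new $4$-faces,'' the face count would contradict Euler's formula. Concretely, suppose $G(\al_1,\ldots,\al_{k-1})$ has a quadrilateral embedding with $V$ vertices, $E=2F$ edges and $F$ faces. Stacking $\al_k+1$ layers gives $V'=(\al_k+1)V$ vertices and $E'=(\al_k+1)E+\al_k V$ edges, so a quadrilateral embedding of the product must have exactly $F'=E'/2=(\al_k+1)F+\al_k V/2$ faces, whereas your splitting rule would produce $2(\al_k+1)F$ faces; these already disagree for $Q_3=G(1,1)\boxempty P_1$ (two stacked $4$-cycles: your rule predicts $8$ faces, the quadrilateral embedding of the cube has $6$). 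The mechanism that actually works — and is the heart of White's tubing construction, which the paper alludes to in Section \ref{subsec:upperbounds} — is surgery: choose a set $\mathcal{F}$ of faces of the embedded layer such that every vertex lies on the boundary of \emph{exactly one} face of $\mathcal{F}$, delete the corresponding pairs of faces in consecutive layers, and join them by tubes, each tube carrying the four vertical edges and being subdivided by them into four quadrilateral faces; this replaces $2$ old faces by $4$ new ones per tube and yields exactly $F'$. But that forces a stronger induction hypothesis (``a quadrilateral embedding \emph{together with} a set of faces partitioning the vertex set exists''), and it is precisely there — e.g., in the divisibility $4\mid V$ and in exhibiting such a face set — that the odd-parameter hypothesis does its work, not in the unspecified ``parity constraints'' of your base case, which you assert without giving the rotation scheme or verifying the face tracing. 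A minor but symptomatic error: your claim that the uniform rotation gives a quadrilateral embedding of the $2$-dimensional grid is false, since the outer face of the planar grid is not a $4$-cycle; indeed, by Corollary \ref{cor:Eulergirth} no $G(\al_1,\al_2)$ other than $G(1,1)$ admits a quadrilateral embedding at all.
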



\section{Upper bounds on the genus of a grid graph}
\label{subsec:3dgenus}

In this section, we  construct a $2$-cell embedding  for each $3$-dimensional grid graph $G(\ast)$, providing an upper bound on the genus of $G(\ast)$. In Subsection \ref{subsec:allodd}, we discuss the case where all three grid parameters are odd. By White's work \cite[Theorem 4]{Wh1970}, such grid graphs are known to be quadrilateral and his proof provides an explicit construction for such an embedding. In Subsection \ref{subsec:allodd}, we provide our own method for building a quadrilateral embedding for such grid graphs. Building off of this construction, we then consider the case where some grid parameters are even in Subsection \ref{subsec:upperbounds}.

\subsection{$G(\al_1, \al_2, \al_3)$ where all grid parameters are odd}
\label{subsec:allodd}

We first describe how to explicitly construct surfaces with a specified $2$-cell structure. We will see that certain $3$-dimensional grid graphs embed on these surfaces as the $1$-skeleton of their $2$-cell structure. Let $U=\{(x_1,x_2,x_3)\in \R^3: 0 \leq x_i \leq 1 \text{ for } i=1,2,3\}$ be the unit cube with it's natural cell decomposition into eight $0$-cells, twelve $1$-cells, six $2$-cells, and one $3$-cell. Let $T(n_1,n_2,n_3)$ be the unit cube translated by the vector $(n_1,n_2,n_3)\in \N_0^3$ and assume $T(n_1, n_2, n_3)$ inherits this $3$-cell structure from $U$. 
Define $C=C(\al_1,\al_2,\al_3)$  for $(\al_1,\al_2,\al_3)\in \N^3$ as 
$$\begin{aligned} C = \{ & (n_1,n_2,n_3)\in \N_0^3: \text{ At least 2 components in $(n_1,n_2,n_3)$ are even,} \\ & \text{ and } n_i < \al_i \text{, for } i = 1,2,3\}. \end{aligned}$$ 

Define  $W(\al_1,\al_2,\al_3)$ as:
$$ W(\al_1,\al_2,\al_3) = \bigcup_{(n_1,n_2,n_3)\in C} T(n_1,n_2,n_3).$$
See Figure \ref{fig:3d-grid-surface} for a visualization of $W(\al_1, \al_2, \al_3)$ where each $\al_i$ is odd, for $i=1, 2, 3$. Intuitively, $W(\al_1, \al_2, \al_3)$ is constructed by gluing together copies of unit cubes along some of their $2$-cells in pairs, where $C$ provides instructions for where to place these cubes in $\mathbb{R}^{3}$ and which $2$-cells to glue together. Based on this construction, it's not difficult to see that $W(\al_1, \al_2, \al_3)$ is a $3$-manifold with boundary. Furthermore, $W(\al_1, \al_2, \al_3)$ inherits a $3$-cell structure, $\mathcal{T}_W$, from the collection of translated cubes,  $\{ T(n_1, n_2, n_3)\}$, used in its construction.

Now  let $S(\al_1, \al_2, \al_3) = \partial W(\al_1, \al_2, \al_3)$, which must be a closed surface since it is the boundary of a $3$-manifold.  $S(\al_1, \al_2, \al_3)$ is  connected since one can easily construct a continuous path from any point in $S(\al_1, \al_2, \al_3)$ to $(0, 0, 0) \in S(\al_1, \al_2, \al_3)$; we leave the details to the reader.   In addition, one can determine that $S(\al_1, \al_2, \al_3)$ is orientable by subdividing each  $2$-cell (all of which are squares) into a pair of triangles and providing consistent orientations across this triangulation. Thus, $S(\al_1, \al_2, \al_3)$ is homeomorphic to $S_g$ for some $g \in \mathbb{N} \cup \{0 \}$. The $3$-cell structure on $W(\al_1, \al_2, \al_3)$  induces a $2$-cell structure on $S(\al_1, \al_2, \al_3)$, which we denote by $\mathcal{T}_{S}$. 
By construction, every face ($2$-cell) of $\mathcal{T}_S$ is bounded by a $4$-cycle.

We define the 1-skeleton of a cell complex, $\Sigma$,  as the union of its $0$-cells and $1$-cells, and refer to this set as $\skel(\Sigma)$. We claim that $$\skel(\mathcal{T}_{S}) = \bigcup_{(n_1,n_2,n_3)\in C} \skel(T(n_1,n_2,n_3)).$$  Suppose there existed a point $x\in\skel(T(n_1,n_2,n_3))$ where $(n_1,n_2,n_3)\in C$ such that $x$ belonged to the interior of $W(\al_1,\al_2,\al_3)$. It must follow then that $x$ is part of the boundary of at least 4 translated unit cubes $T_1,T_2,T_3,T_4\subseteq W(\al_1,\al_2,\al_3)$. However, the vectors that determine these unit cubes cannot all be in $C(\al_1,\al_2,\al_3)$, which implies one of these unit cubes is not in $W(\al_1,\al_2,\al_3)$. Thus, it must be the case that every edge in $\skel(T(n_1,n_2,n_3))$ must lie on the boundary of $W(\al_1,\al_2,\al_3)$. By definition of $\mathcal{T}_{S}$, we have that 
$$\bigcup_{(n_1,n_2,n_3)\in C} \skel(T(n_1,n_2,n_3))\subseteq \skel(\mathcal{T}_{S}).$$ 
Inclusion in the other direction follows trivially from the definitions. In the following proposition, we will show that if $\al_1$, $\al_2$, and $\al_3$ are all odd, then $G(\al_1, \al_2, \al_3)$ embeds on $S(\al_1, \al_2, \al_3)$ as $\skel(\mathcal{T}_{S})$.

\begin{figure}[ht]
\centering
\includegraphics[width=\textwidth]{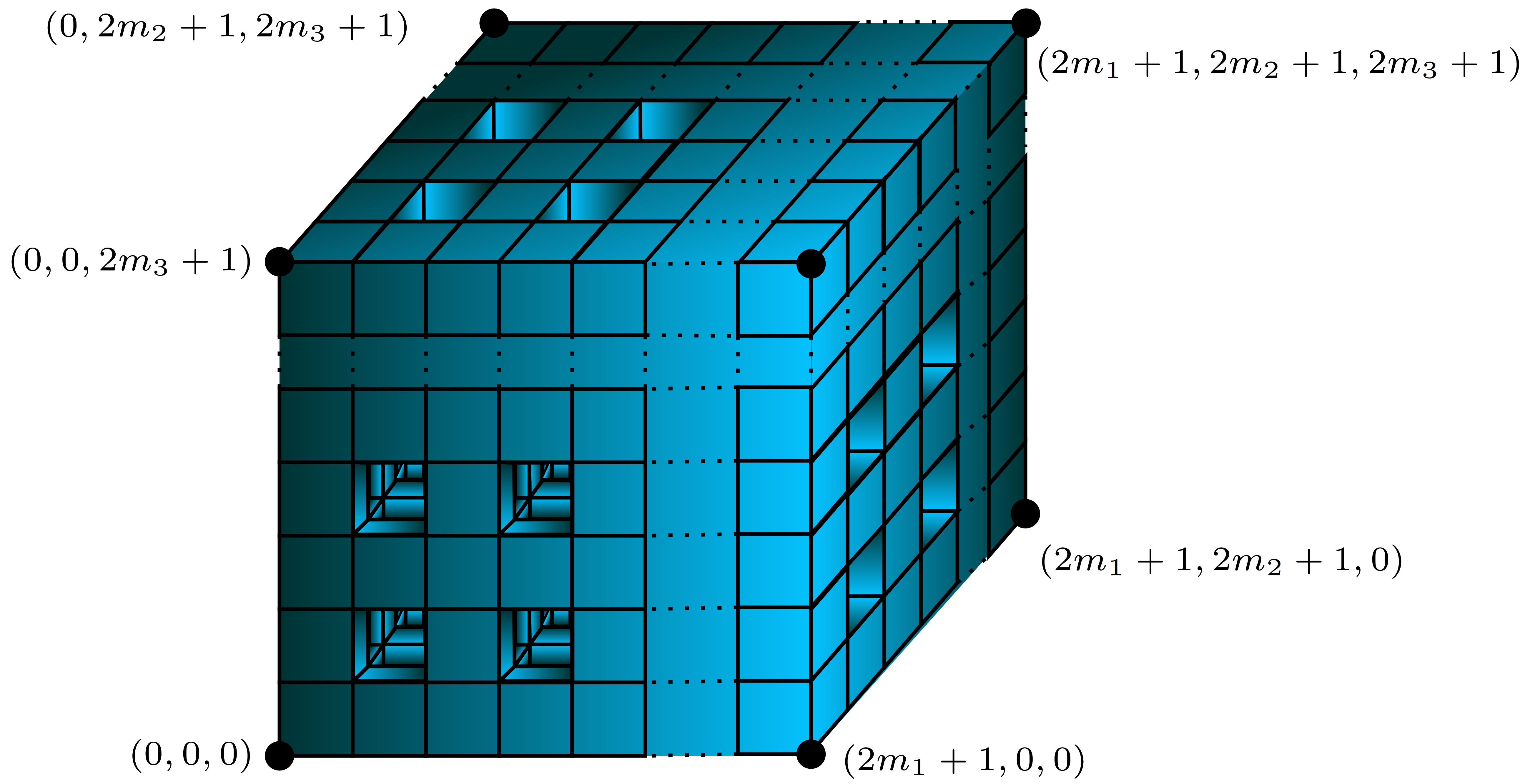}
\caption{A representation of $W(\al_1,\al_2,\al_3)$ when $\al_1,\al_2,\al_3$ are all odd.}
\label{fig:3d-grid-surface}
\end{figure}

\begin{proposition}
\label{prop:3dalloddgenus}
Let $G(\al_1,\al_2,\al_3)$ be a $3$-dimensional grid graph where all three grid parameters are odd. Then $G(\al_1,\al_2,\al_3)$ admits a quadrilateral embedding, i.e., 
$$\begin{aligned} \gamma(G(\al_1,\al_2,\al_3)) & = 1 + \frac{|E(G(\al_1,\al_2,\al_3))|}{4} -\frac{|V(G(\al_1,\al_2,\al_3))|}{2}. \\ \end{aligned}$$
\end{proposition}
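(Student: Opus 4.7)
The plan is to exhibit the embedding directly by showing that, when all three grid parameters are odd, the graph $G(\al_1,\al_2,\al_3)$ is isomorphic to $\skel(\mathcal{T}_S)$, the $1$-skeleton of the $2$-cell decomposition $\mathcal{T}_S$ of $S(\al_1,\al_2,\al_3)$ constructed above. Since the preceding discussion already establishes that $S(\al_1,\al_2,\al_3)$ is a closed connected orientable surface and that every face of $\mathcal{T}_S$ is bounded by a $4$-cycle, such an isomorphism immediately yields a quadrilateral $2$-cell embedding of $G(\al_1,\al_2,\al_3)$. The genus formula then follows from the equality case of Corollary \ref{cor:Eulergirth}.

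To establish the isomorphism I would identify each vertex $(v_1,v_2,v_3)\in V(G(\al_1,\al_2,\al_3))$ with the lattice point in $\R^3$ having those coordinates, and then check a vertex bijection and an edge bijection with $\skel(\mathcal{T}_S)$. Every corner of every cube $T(n_1,n_2,n_3)$ with $(n_1,n_2,n_3)\in C$ is manifestly a lattice point in $[0,\al_1]\times[0,\al_2]\times[0,\al_3]$, giving one inclusion. For the other direction, given a lattice point $(v_1,v_2,v_3)$, I would produce $(n_1,n_2,n_3)\in C$ having $(v_1,v_2,v_3)$ as a corner of $T(n_1,n_2,n_3)$. The key observation, and the one place where the oddness hypothesis is actually used, is that one can always choose $n_i\in\{v_i-1,v_i\}\cap[0,\al_i-1]$ to be even: if $v_i=0$ take $n_i=0$; if $v_i=\al_i$ (which is odd) take $n_i=\al_i-1$ (which is even); otherwise pick whichever of $v_i-1$ and $v_i$ is even. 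Choosing all three $n_i$'s even certainly puts $(n_1,n_2,n_3)$ in $C$.

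The edge bijection is handled analogously. Any $1$-cell of a $T(n_1,n_2,n_3)$ with $(n_1,n_2,n_3)\in C$ joins two corners differing in exactly one coordinate by $1$, and is therefore an edge of $G(\al_1,\al_2,\al_3)$. Conversely, given an edge of $G(\al_1,\al_2,\al_3)$, say from $(v_1,v_2,v_3)$ to $(v_1+1,v_2,v_3)$, I would force $n_1=v_1$ and choose $n_2,n_3$ even by the same recipe, realizing the edge as a $1$-cell of $T(n_1,n_2,n_3)$ with $(n_1,n_2,n_3)\in C$. Combining the two bijections yields the desired graph isomorphism $G(\al_1,\al_2,\al_3)\cong \skel(\mathcal{T}_S)$.

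The main obstacle is really just careful bookkeeping in the boundary cases where some $v_i$ equals $0$ or $\al_i$; the oddness of each $\al_i$ is precisely what guarantees that an even choice of $n_i$ is always available, which is also the reason this argument breaks when some $\al_i$ is even (a case postponed to Subsection \ref{subsec:upperbounds}). Once the isomorphism is in hand, the $2$-cell embedding, the quadrilateral property, and the genus formula via Corollary \ref{cor:Eulergirth} all follow immediately.
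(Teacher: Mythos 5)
Your proposal is correct and takes essentially the same approach as the paper: both identify $G(\al_1,\al_2,\al_3)$ with $\skel(\mathcal{T}_S)$ inside $S(\al_1,\al_2,\al_3)$, using the oddness of each $\al_i$ to choose an even $n_i \in \{v_i-1, v_i\}$ with $n_i < \al_i$ (your recipe is exactly the paper's $2\lfloor n_i/2\rfloor$), and then conclude via the equality case of Corollary \ref{cor:Eulergirth}. The only cosmetic difference is that the paper proves the point-set equality $Y = \skel(\mathcal{T}_S)$ directly, where $Y$ is the straight-line embedding of the grid graph in $\R^3$, rather than splitting the identification into separate vertex and edge bijections.
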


\begin{proof}
Let $G(\ast) = G(\al_1, \al_2, \al_3)$ be a $3$-dimensional grid graph with all odd grid parameters. Corollary \ref{cor:Eulergirth} tells us that $\gamma(G(\ast)) = 1 + \frac{|E(G(\ast))|}{4} - \frac{|V(G(\ast))|}{2}$ if and only if $G$ admits a $2$-cell embedding on a surface where every face is bound by a $4$-cycle in $G(\ast)$. We now describe how to construct a $2$-cell embedding of $G(\ast)$ on a surface $S$ that meets these qualifications. Let $S= S(\al_1,\al_2,\al_3)$ be as defined above with corresponding $2$-cell structure $\mathcal{T}_{S}$.

Consider the following subset of $\R^3$:
$$ \begin{aligned} Y = & \{(x_1,x_2,x_3)\in \R^3: \text{ At least 2 components in $(x_1,x_2,x_3)$ are integers,} \\ & \text{ and } 0\leq x_i \leq \al_i \text{, for } i = 1,2,3\}. \end{aligned}$$

It isn't too difficult to see that $Y$ is an embedding of $G(\ast)$ in $\R^3$ where vertices of $G(\ast)$ are mapped to their respective coordinates in $\mathbb{R}^{3}$ and each edge of $G(\ast)$ is mapped to a straight line segment.

We first show $\skel(\mathcal{T}_{S})\subseteq Y$. Recall that $\skel(\mathcal{T}_{S})$ is constructed by taking the union of 1-skeletons of the translated unit cubes used in the construction of $W(\al_1,\al_2,\al_3)$. Given how we defined the unit cube $U$, it follows that any $(u_1,u_2,u_3)\in \skel(U)$ must have that at least 2 components in $(u_1,u_2,u_3)$ that are integers and $0\leq u_i \leq 1\leq \al_i \text{ for } i = 1,2,3$. By the definition of $Y$, we can deduce that $\skel(U)\subseteq Y$. 

Now consider any $T(n_1,n_2,n_3)$ with $(n_1,n_2,n_3)\in C = C(\al_1,\al_2,\al_3)$. Fix $(x_1,x_2,x_3)\in \skel(T(n_1,n_2,n_3))$, and note that $(x_1,x_2,x_3) = (u_1+n_1,u_2+n_2,u_3+n_3)$ for some $ (u_1,u_2,u_3)\in \skel(U)$. By definition of $C$, we have that $0\leq n_i < \al_i$ and $n_i\in \N$ for $i = 1,2,3$. Since $0\leq u_i \leq 1 \text{ for } i = 1,2,3$, we have that
$$0 \leq u_i+n_i \leq \al_i \implies 0 \leq x_i \leq \al_i, \text{ for } i = 1,2,3.$$

Also, since at least two components in $(u_1,u_2,u_3)$ must be integers (otherwise $(u_1,u_2,u_3)\notin\skel(U)$) and $n_1, n_2, n_3 \in \mathbb{Z}$, we can deduce that at least two components in $(u_1+n_1,u_2+n_2,u_3+n_3) = (x_1,x_2,x_3)$ are also integers. So then by definition of $Y$, $(x_1,x_2,x_3)\in Y$. Since the argument holds for any $(x_1,x_2,x_3)\in \skel(T(n_1,n_2,n_3))$, we have $\skel(T(n_1,n_2,n_3))\subseteq Y$. Since the argument holds for any $(n_1,n_2,n_3)\in C$ and $\skel(U)\subseteq Y$, we have that
$$\skel(\mathcal{T}_{S}) = \bigcup_{(n_1,n_2,n_3)\in C} \skel(T(n_1,n_2,n_3)) \subseteq Y,$$

as desired. 
We now must show that $Y\subseteq \skel(\mathcal{T}_{S})$. Let $(x_1,x_2,x_3)\in Y$. Without loss of generality, suppose $x_1 = n_1$ and $x_2 = n_2$ where $n_1,n_2\in \Z$. Also, let $n_3\in \N_0$ be the largest such that $n_3\leq x_3$ and $n_3<\al_3$. By definition of $Y$, we can deduce that $0 \leq n_i  \leq \al_i$ for $i = 1,2,3$. Now consider $\left(2\left\lfloor\frac{n_1}{2}\right\rfloor ,2\left\lfloor \frac{n_2}{2}\right\rfloor, n_3 \right) \in \N_0^3$. We aim to show that $\left(2\left\lfloor\frac{n_1}{2}\right\rfloor ,2\left\lfloor \frac{n_2}{2}\right\rfloor, n_3 \right)\in C$, and 
$$(x_1,x_2,x_3)\in \skel(T\left(2\left\lfloor\frac{n_1}{2}\right\rfloor ,2\left\lfloor \frac{n_2}{2}\right\rfloor, n_3 \right)) \subseteq \skel(\mathcal{T}_{S}).$$

Since by assumption $\al_1$ and $\al_2$ are both odd, we have
$$0 \leq n_i  \leq \al_i \implies 0 \leq 2\left\lfloor \frac{n_i}{2}\right\rfloor  < \al_i, \text{ for } i =1,2.$$

Clearly, $\left(2\left\lfloor\frac{n_1}{2}\right\rfloor ,2\left\lfloor \frac{n_2}{2}\right\rfloor, n_3 \right)$ contains 2 even components and since  we assumed $n_3< \al_3$, we have by the definition of $C$ that 
$$\left(2\left\lfloor\frac{n_1}{2}\right\rfloor ,2\left\lfloor \frac{n_2}{2}\right\rfloor, n_3 \right)\in C.$$

Now depending on the parity of $n_1$ and $n_2$, we have that $(x_1,x_2,x_3) = (n_1,n_2,x_3)$ lies on exactly one of four 1-simplices in $T\left(2\left\lfloor\frac{n_1}{2}\right\rfloor ,2\left\lfloor \frac{n_2}{2}\right\rfloor, n_3 \right))$. Figure \ref{Translated-cube-figure} illustrates this.

\begin{figure}[ht]
\centering
\includegraphics[width=\textwidth]{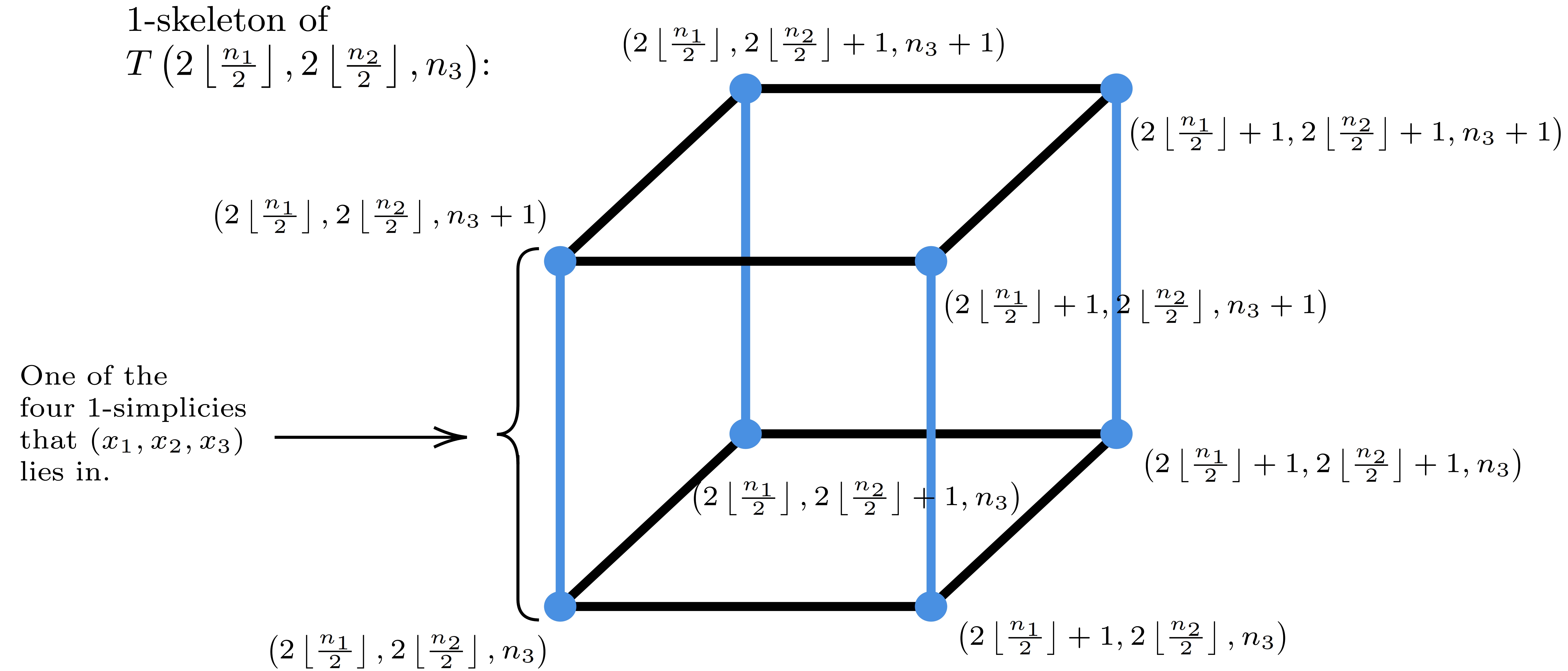}
\caption{The 1-skeleton of the translated unit cube $T\left(2\left\lfloor\frac{n_1}{2}\right\rfloor ,2\left\lfloor \frac{n_2}{2}\right\rfloor, n_3 \right))$. The parity of $n_1$ and $n_2$ determine which 1-simplex $(x_1,x_2,x_3)$ lies in and how $n_3$ was defined implies $n_3\leq x_3\leq n_3+1$.}
\label{Translated-cube-figure}
\end{figure}

Since the argument holds for any $(x_1,x_2,x_3)\in Y$, we have that $$Y\subseteq \skel(\mathcal{T}_{S}).$$

In summary, we have $Y$ is an embedding of $G(\ast)$ in $\mathbb{R}^{3}$ with $Y = \skel(\mathcal{T}_{S})$. By construction, $\skel(\mathcal{T}_{S})$ is the one-skeleton of the $2$-cell structure $\mathcal{T}_{S}$ for the surface $S$, where each face bounds a $4$-cycle, completing the proof. \end{proof}

The construction used in the proof of Proposition \ref{prop:3dalloddgenus} cannot  immediately be extended to $3$-dimensional grid graphs where some grid parameters are even. It is always the case that the $1$-skeleton of $\mathcal{T}_{S(\al_1,\al_2,\al_3)}$ will provide a $2$-cell embedding of a graph $G$ on the surface $S(\al_1, \al_2, \al_3)$ where every face bounds a $4$-cycle. However, if at least one of the grid parameters is even, then the graph $G$ will not  be isomorphic to the desired grid graph. For example, one could construct the surface $S(2,2,2)$ with its corresponding $2$-cell complex $\mathcal{T}_{S(2,2,2)}$. Following the proof of Proposition \ref{prop:3dalloddgenus}, one can still show that $\skel(\mathcal{T}_{S(2,2,2)}) \subseteq Y$ where $Y$ is the embedding of $G(2,2,2)$ in $\R^3$ outlined in the proof. 

Trouble arises, however, in showing that $Y \subseteq \skel(\mathcal{T}_{S(2,2,2)})$, since $0 \leq 2\left\lfloor \frac{n_i}{2}\right\rfloor  < \al_i$ does not hold for $n_i = \al_i = 2$. Effectively, this implies that $\skel(\mathcal{T}_{S(2,2,2)})$ is a proper subgraph of $G(2,2,2)$; similar arguments hold for any grid graph with even components.

\subsection{$G(\al_1, \al_2, \al_3)$ where some grid parameters are even}
\label{subsec:upperbounds}

Here, we establish effective upper bounds on the genus of any $3$-dimensional grid graph where some grid parameter is even. Afterwards, we briefly describe how to construct a general (ineffective) upper bound on the genus of any $k$-dimensional grid graph.

Let $G(\ast)$ be a grid graph and let $I \subset V(G(\ast))$. We say that $G_I$ is the \textbf{subgraph of $G(\ast)$ induced by $I$} where $V(G_I) = I$ and $E(G_I)$ is the set of all edges in $G(\ast)$ that connect two vertices in the set $I$.

\begin{proposition}
\label{prop:3dcase3}
If $G(\al_1, \al_2, \al_3)$ be a $3$-dimensional grid graph with at least one even grid parameter. \begin{enumerate}
\item If exactly one grid parameter is even (the third parameter), then $$\gamma(G(\al_1, \al_2, \al_3)) \leq \gamma(G(\al_1, \al_2, \al_3 -1)) + \frac{(\al_1 +1)(\al_2+1)}{4} -1.$$
\item If exactly two grid parameters are even (the second $\&$ third parameters), then  $$ \gamma(G(\al_1, \al_2, \al_3)) \leq \gamma(G(\al_1, \al_2, -1, \al_3 -1)) + \frac{(\al_1 +1)(\al_2 + \al_3)}{4}   - 1.$$
\item If all three grid parameters are even, then $$\gamma(G(\al_1, \al_2, \al_3)) \leq  \gamma(G(\al_1 -1, \al_2 -1, \al_3 -1)) + \frac{\al_1\al_2 + \al_1\al_3 + \al_2\al_3}{4}-1.$$
\end{enumerate}
\end{proposition}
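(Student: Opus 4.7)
The plan is to prove each bound by explicitly constructing a $2$-cell embedding of $G(\al_1, \al_2, \al_3)$ that builds on the minimal quadrilateral embedding of the associated ``all-odd subgrid'' (obtained by subtracting one from each even parameter), using a controlled number of extra handles to accommodate the missing boundary slabs.

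For Case 1, let $G' = G(\al_1, \al_2, \al_3 - 1)$, which has all odd parameters since $\al_3$ is even. By Proposition \ref{prop:3dalloddgenus}, $G'$ admits a quadrilateral embedding on $S' = \partial W(\al_1, \al_2, \al_3-1)$ of genus $g_0 = \gamma(G')$. I would enlarge the $3$-manifold $W(\al_1, \al_2, \al_3-1)$ by attaching the $\frac{(\al_1+1)(\al_2+1)}{4}$ extra unit cubes $T(n_1, n_2, \al_3-1)$ with $n_1, n_2$ both even. Since each attachment is along a single face, the boundary surface remains homeomorphic to $S'$, while its $1$-skeleton now contains all of $G(\al_1, \al_2, \al_3)$ except for a collection of ``missing'' top edges at $x_3 = \al_3$ (the edges of top unit squares not contained in any attached cube top). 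These missing edges would then be installed through $\frac{(\al_1+1)(\al_2+1)}{4} - 1$ extra handles, each connecting a pair of faces of the augmented surface across a missing top square and carrying several missing edges. Euler characteristic bookkeeping confirms that this handle count is consistent with a $2$-cell embedding on a surface of genus $g_0 + \frac{(\al_1+1)(\al_2+1)}{4} - 1$.

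For Cases 2 and 3, I would run the Case 1 style augmentation on two (respectively three) orthogonal boundary slabs of the appropriate all-odd subgrid $G(\al_1, \al_2-1, \al_3-1)$ or $G(\al_1-1, \al_2-1, \al_3-1)$. The main obstacle is that the extra slabs share common grid-edges (and, in Case 3, a common corner vertex), so handles required to install missing edges near these shared loci must be carefully accounted for to avoid over-counting. After establishing Case 1, the handle counts for the remaining cases follow from a careful inclusion-exclusion type analysis that leads to the stated bounds $\frac{(\al_1+1)(\al_2+\al_3)}{4} - 1$ and $\frac{\al_1\al_2 + \al_1\al_3 + \al_2\al_3}{4} - 1$.
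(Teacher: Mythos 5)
Your Case 1 construction is a genuinely different route from the paper's, and its arithmetic does match the stated bound: the $\frac{(\al_1+1)(\al_2+1)}{4}$ cubes $T(n_1,n_2,\al_3-1)$ with $n_1,n_2$ even are indeed pairwise disjoint and each meets $W(\al_1,\al_2,\al_3-1)$ in exactly one face, so the boundary surface keeps genus $\gamma(G(\al_1,\al_2,\al_3-1))$, and the missing edges are exactly the top-grid edges $[(i,j),(i+1,j)]$ with $i$ odd and $[(i,j),(i,j+1)]$ with $j$ odd, $\al_1\al_2-1$ of them in total. But the crucial step is missing. These edges join corners of \emph{distinct} cube tops, and the number of adjacent pairs of cube tops that must exchange edges is $\frac{\al_1\al_2-1}{2}$, which exceeds your handle budget $N-1=\frac{(\al_1+1)(\al_2+1)}{4}-1$ by $\frac{(\al_1-1)(\al_2-1)}{4}$. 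So for $\al_1,\al_2\geq 3$ you cannot give each such pair its own handle: many missing edges must be routed through chains of handles, transiting across intermediate cube-top faces, and whether all these arcs can be drawn simultaneously without crossing each other or the skeleton already on the surface is precisely the content of the proposition. (Note that no handle-free shortcut exists: an arc cannot simply dip into the trench between two cube tops, because it would have to cross the grid edges bounding the trench walls and floor.) Your proposal replaces this verification with ``Euler characteristic bookkeeping confirms that this handle count is consistent,'' but Euler characteristic consistency is a necessary condition, never a sufficient one; it cannot certify that an embedding exists. I believe the count is achievable---take a spanning tree of the adjacency graph of cube tops and check the transit routing explicitly---but as written this is a genuine gap, not a detail.

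Cases 2 and 3 have a worse problem that ``inclusion--exclusion on handle counts'' does not touch: the augmented surface is missing \emph{vertices}, not just edges. In Case 2, for a vertex $(i,\al_2,\al_3)$ to lie on an attached cube, that cube would have to be $T(n_1,\al_2-1,\al_3-1)$, which has two odd coordinates and hence is never attached; so the entire line of $\al_1+1$ vertices with $x_2=\al_2$ and $x_3=\al_3$ (in Case 3, three such lines plus the corner) lies on no cube and is absent from the surface. Embedding these vertices and routing all of their incident edges is qualitatively harder than installing edges between vertices already in place, and your sketch gives no construction for it. This is exactly the difficulty the paper's proof is engineered to avoid: instead of augmenting the handlebody, it splits off the \emph{whole} outer shell $G_J$ (all vertices with some coordinate maximal), embeds $G_J$ planarly on a separate $2$-sphere---so the vertices and edges your surface is missing come for free---and then joins the sphere to the quadrilateral embedding of the inner all-odd grid by handles, each carrying exactly the four connecting edges at the corners of two parallel faces. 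There the non-crossing condition is immediate, each handle's load is fixed at four edges of $E_B$, and the bound is just the handle count minus one.
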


\begin{proof}
Here, we provide full details for proving the third statement. The first two statements follow from similar arguments and brief details for these are given at the end of the proof.

Suppose $G(\ast) = G(\al_1, \al_2, \al_3)$ is a $3$-dimensional grid graph with all three grid parameters even. Let 
$$I = \{ (\beta_1, \beta_2, \beta_3)  :  0 \leq \beta_i \leq \al_i -1 , \hspace{0.05in} \text{for all} \hspace{0.05in} i=1,2,3  \} \subset V(G(\ast))$$ and let 
$$J = \{ (\beta_1, \beta_2, \beta_3)   :  \beta_i = \al_i \hspace{0.05in} \text{for some} \hspace{0.05in} i=1,2,3 \} \subset V(G(\ast)).$$  Consider the induced subgraphs $G_I$ and $G_J$ of $G(\ast)$. It is easy to see that $G_I \cap G_J = \emptyset$,  $V(G(\ast)) = V(G_I) \cup V(G_J)$, and  $G_I \cong G(\al_1 -1, \al_2 -1, \al_3 -1)$. Let $E_B = E(G(\ast)) \setminus \Big(E(G_I) \cup E(G_J)\Big)$. By construction, we have that $G(\ast) = G_I \cup G_J \cup E_B$.  

Since $G_I$ is a $3$-dimensional grid graph with all odd parameters, we can $2$-cell embed $G_I$ on a surface $S_I$ whose genus is $\gamma(G_I)$ where every face bounds a $4$-cycle, as done in the proof of Proposition \ref{prop:3dalloddgenus}. In particular, the embedding of $(G_I, S_I)$ can be realized in $\mathbb{R}^{3}$ where vertices of $G_I \cong G(\al_1 -1, \al_2 -1, \al_3 -1)$ are placed at their corresponding coordinates in $\mathbb{R}^{3}$ and where edges are embedded as straight line segments between these vertices.

We now consider a particular decomposition and embedding of $G_J$. First off, it isn't difficult to see that $G_J$ $2$-cell embeds on a $2$-sphere $S_J \subset \mathbb{R}^{3}$ that is disjoint from $(G_I, S_I) \subset \mathbb{R}^{3}$. Consider $G_J$ embedded in $\mathbb{R}^{3}$ with each vertex of $G_J$ placed at its respective coordinate and where edges  of $G_J$ are embedded as straight line segments between vertices.  Insert a $2$-cell bounding each $4$-cycle in $G_J$ to create a $2$-cell complex that will be homeomorphic to a $2$-disk, $D_1$. Attach another (topological) $2$-disk, $D_2$, to $D_1$ glued along their respective boundaries, where $D_2$ is embedded in $\mathbb{R}^{3}$ and is disjoint from the surface $S_{I}$ (many such $2$-disks exist) to create the $2$-sphere $S_J$ with the necessary conditions.

Let $(G_J, S_J)$ denote this particular embedding of $G_J$ and let 
$$J_1 = \{ (\al_1, \beta_2, \beta_3)  :  0 \leq \beta_i \leq \al_{i}-1 \hspace{0.05in} \text{for i} \hspace{0.05in} = 2, 3 \} \subset V(G_J) \subset V(G),$$
$$J_2 = \{ (\beta_1, \al_2, \beta_3)  :  0 \leq \beta_i \leq \al_{i}-1 \hspace{0.05in} \text{for i} \hspace{0.05in} = 1, 3 \}\subset V(G_J) \subset V(G),$$
$$J_3 = \{ (\beta_1, \beta_2, \al_3)  :  0 \leq \beta_i \leq \al_{i}-1 \hspace{0.05in} \text{for i} \hspace{0.05in} = 1, 2 \}\subset V(G_J) \subset V(G).$$
We consider the induced subgraphs for these respective sets of vertices and note that $G_{J_{1}} \cong G(\al_2 -1, \al_3 -1)$, $G_{J_{2}} \cong G(\al_1 -1, \al_3 -1)$, and $G_{J_{3}} \cong G(\al_1 -1, \al_2 -1)$.   

To construct an embedding of $G(\ast)$ on a surface $S$ with the required genus, we will take $(G_I, S_I)$ and $(G_J, S_J)$ and add in handles to connect these two surfaces while adding in the edges from $E_B$ along these handles to connect $G_I$ to $G_J$ in the required manner. By construction, $E_B$ must connect a vertex from $G_J$ to a vertex in $G_I$. By definition of grid graph, this implies that for any edge $[v_i, v_j] \in E_B$ with say $v_i \in G_I$ and $v_j \in G_J$, we must have that $v_i$ and $v_j$ have the same value for two of their grid parameters while the remaining grid parameter is $\al^* -1$ for $v_i$ and $\al^*$ for $v_j$, where $\al^*$ could be either $\al_1, \al_2,$ or $\al_3$. Thus, there is a bijective correspondence between $E_B$ and $J_1 \cup J_2 \cup J_3$. Furthermore, for each vertex  $v_i = (\al_1, \beta_2, \beta_3) \in J_1$, there is a unique edge in $E_B$ connecting $v_i$ to the vertex $v_j = (\al_1 -1, \beta_2, \beta_3) \in V(G_I)$, and similarly done for $J_2$ and $J_3$. To add in the edges of $E_B$ that connect  $G_{J_{1}} \subset G_J$ to $G_I$, remove the interiors of the faces of $(G_J, S_J)$ centered at $(\al_1, 1/2+2m, 1/2+2n)$ where $m=0,\ldots, \frac{\al_2}{2}-1$ and $n = 0, \ldots, \frac{\al_3}{2}-1$ Likewise, remove the interiors of the corresponding face on $(G_I, S_I)$, i.e. remove the interiors of the faces centered at $(\al_1 -1, 1/2+2m, 1/2+2n)$ where $m=0,\ldots, \frac{\al_2}{2}-1$ and $n = 0, \ldots, \frac{\al_3}{2}-1$; by construction, the $2$-cell complex for $S_I$ has faces centered at all of these coordinates.  Now attach a handle (a copy of $\mathbb{S}^{1} \times [0,1]$) between corresponding faces that have been removed, where $\mathbb{S}^{1} \times \{ 0 \}$ glues to the face on $G_{J_{1}}$ and $\mathbb{S}^{1} \times \{ 1 \}$ glues to the corresponding face on $G_I$.  At the same time, we can embed a set of four edges from $E_B$ along each such handle, where each  edge connects from a vertex $v_j \in J_1$ to the corresponding vertex $v_I \in I$ as described above.  This will result in adding $(\frac{\al_2}{2})(\frac{\al_3}{2})$ handles between $(G_I, S_I)$ and $(G_J, S_J)$. A similar procedure can be performed for $J_2$ and $J_3$, which results in  adding $(\frac{\al_1}{2})(\frac{\al_3}{2})$ and $(\frac{\al_1}{2})(\frac{\al_2}{2})$ handles, respectively, between $(G_I, S_I)$ and $(G_J, S_J)$. At the end of this process, we will have a $2$-cell embedding of $G(\ast)$ on a surface $S$ with genus

$$\begin{aligned} \gamma(S) &= \gamma(S_I) + \gamma(S_J) + \Big(\frac{\al_2}{2}\Big)\Big(\frac{\al_3}{2}\Big) +\Big(\frac{\al_1}{2}\Big)\Big(\frac{\al_3}{2}\Big) + \Big(\frac{\al_1}{2}\Big)\Big(\frac{\al_2}{2}\Big) -1 \\
&= \gamma(G(\al_1 -1, \al_2 -1, \al_3 -1)) + \frac{\al_1\al_2 + \al_1\al_2 + \al_2\al_3}{4}-1.
\end{aligned}$$

We subtract $1$ at the end of this formula since the first handle attached connecting $S_I$ to $S_J$ does not contribute to the genus of the resulting surface. 

For items (1) and (2), a similar decomposition of $G(\ast)$ can be used. For instance, to prove (1), let $I = \{ (\beta_1, \beta_2, \beta_3) \hspace{0.05in} : \hspace{0.05in} 0 \leq \beta_1 \leq \al_1, 0 \leq \beta_2, \leq \al_2, 0 \leq \beta_3 \leq \al_3 -1 \} \subset V(G)$ and let $J = \{ (\beta_1, \beta_2, \al_3) \hspace{0.05in} : \hspace{0.05in} 0 \leq \beta_1 \leq \al_1, 0 \leq \beta_2 \leq \al_2 \} \subset V(G).$ Consider the induced subgraphs $G_I$ and $G_J$ of $G(\ast)$, and define $E_B = E(G(\ast)) \setminus \Big(E(G_I) \cup E(G_J)\Big)$. By construction, we have that $G(\ast) = G_I \cup G_J \cup E_B$. Again, use the $2$-cell embedding of $G_I$ on $S_I$ coming from Proposition \ref{prop:3dalloddgenus} and use the $2$-cell  embedding of $G_J$ on a $2$-sphere $S_J$ in the same manner as the previous case.  Now, add handles between $(S_I, G_I)$ and $(S_J, G_J)$ that contain the edges of $E_B$ in order to construct a $2$-cell embedding of $G(\ast)$ on a surface with the required genus.  
\end{proof}

\begin{remark}
By Corollary \ref{cor:Eulergirth}, the upper bounds in Proposition \ref{prop:3dcase3} can be computed  in terms of the grid parameters of the given grid graph. 
\end{remark}

\begin{remark}
The upper bounds for genus given in Proposition \ref{prop:3dcase3} are sharp for infinite families of grid graphs; see the proofs of Theorem \ref{thm:genus} and  Theorem \ref{thm:Da22}. In fact, we conjecture that this upper bound is always sharp; see Conjecture \ref{conjecture1}.
\end{remark}

The following corollary gives an alternative method for calculating the genus of any surface constructed in the proof of  Proposition \ref{prop:3dcase3}.  In what follows, let $P = 0$ if $G(\ast)$ has no even grid parameters, $P = 2\al_1 + 2\al_2$ if $G(\ast)$ has one even grid parameter (the third parameter), and  $P = 2\al_1 + 2\al_2 + 2\al_3$ if $G(\ast)$ has two or three even grid parameters. Intuitively,  $P$ stands for the perimeter length of a particular face constructed in the $2$-cell embedding from Proposition \ref{prop:3dcase3}.

\begin{corollary}
Let $G(\al_1, \al_2, \al_3)$ be a $3$-dimensional grid graph. Then $G(\al_1, \al_2, \al_3)$ $2$-cell embeds on a surface $S$ with genus $$\gamma(S) = \frac{1}{2} + \frac{|E(G(\al_1, \al_2, \al_3))|}{4}  - \frac{|V(G(\al_1, \al_2, \al_3))|}{2}  + \frac{P}{8}.$$
\end{corollary}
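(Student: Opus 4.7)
The plan is to compute the genus of the $2$-cell embedding of $G(\al_1, \al_2, \al_3)$ on $S$ produced by the construction in Proposition \ref{prop:3dcase3} by a direct application of Euler's formula, after first enumerating the faces of the embedding.

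First, I would verify that every face of this $2$-cell embedding is bounded by a $4$-cycle, with exactly one exception: a distinguished ``large'' face whose boundary has combinatorial length $P$. This exceptional face is the topological $2$-disk $D_2$ used in the construction to close off $G_J$'s planar embedding into the $2$-sphere $S_J$. The key observation is that every handle attachment in the construction is performed at an \emph{interior} quadrilateral face of $G_J$'s embedding inside $D_1 \subset S_J$ (and at a matching quadrilateral face on $S_I$), so $D_2$ is never disturbed. Moreover, each handle step removes two quadrilateral faces and, by running four $E_B$-edges along the handle cylinder, produces four new quadrilateral faces, preserving the property that every face other than $D_2$ bounds a $4$-cycle.

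Second, I would verify that the boundary of $D_2$ has combinatorial length exactly $P$ by a case analysis mirroring the three cases of Proposition \ref{prop:3dcase3}. In case (1), $G_J \cong G(\al_1, \al_2)$ is a $2$-dimensional grid whose planar embedding has outer boundary a rectangle of perimeter $2\al_1 + 2\al_2$. In cases (2) and (3), $G_J$ is the induced subgraph on two or three ``faces'' of a combinatorial cube meeting along shared paths (or at a common corner); flattening this shape into the plane and tracing the free sides (those not identified with another face) yields a closed curve of total length $2\al_1 + 2\al_2 + 2\al_3$ in each sub-case, which is precisely $P$.

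Third, I would apply the identity $\sum_{f}|\partial f| = 2|E|$, which holds for any $2$-cell embedding since each edge lies on the boundary of exactly two faces. Combining this with the face enumeration above gives $4(|F| - 1) + P = 2|E|$, hence $|F| = 1 + |E|/2 - P/4$. Substituting into Euler's formula $|V| - |E| + |F| = 2 - 2\gamma(S)$ and solving for $\gamma(S)$ yields exactly the claimed expression. The main obstacle is the face-bookkeeping in the first step: one must explicitly confirm that the handle attachments in Proposition \ref{prop:3dcase3} leave $D_2$ intact and that each handle-cylinder, after insertion of its four $E_B$-edges, is partitioned into precisely four quadrilateral faces rather than fewer larger ones.
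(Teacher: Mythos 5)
Your proposal follows essentially the same route as the paper's proof: isolate the unique non-quadrilateral face of the embedding from Proposition \ref{prop:3dcase3} (the disk closing off $S_J$, whose boundary is the boundary of $D_1$), show its boundary has combinatorial length $P$, use the incidence count $\sum_f |\partial f| = 2|E|$ to obtain $|F| = 1 + |E|/2 - P/4$, and finish with Euler's formula; your handle bookkeeping (each handle trades two quadrilateral faces for four quadrilateral faces) and the perimeter case analysis are correct and in fact more detailed than what the paper writes out. The one omission is the case where all three grid parameters are odd: there $P=0$ but Proposition \ref{prop:3dcase3} supplies no construction (its hypothesis requires an even parameter), so that case must be dispatched separately by citing the quadrilateral embedding of Proposition \ref{prop:3dalloddgenus}, exactly as the paper does in its first line.
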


\begin{proof}
First, if all three grid parameters are odd then $P=0$ and the result immediately follows from Proposition \ref{prop:3dalloddgenus}. From here, we follow the same procedure used in the proof of Corollary \ref{cor:Eulergirth} while taking into account the specific $2$-cell embeddings built in the proof of Proposition \ref{prop:3dcase3}. Let $(G(\ast),S_g)$ be a $2$-cell embedding of $G(\ast) = G(\al_1, \al_2, \al_3)$ on the surface $S_g$ from the proof of Proposition \ref{prop:3dcase3}. By construction, every face in this $2$-cell embedding bounds a 4-cycle, except the one ``outer'' face $F^*$ on the $2$-sphere $S_J$; this face is homeomorphic to the $2$-disk $D_1$ used to construct $S_J$ in the proof of Proposition \ref{prop:3dcase3}. The cycle length of the boundary of $F^*$ is exactly $P$. Since every edge bounds exactly $2$-faces, we have that $|E(G(\ast))| = \frac{4(|F|-1)}{2} + \frac{P}{2}$, which implies that $|F| = \frac{|E(G(\ast))|}{2} - \frac{P}{4} +1$. Thus,

$$\begin{aligned} 2 - 2g & = \chi(S_g) = |V(G(\ast))| - |E(G(\ast))| + \Big(\frac{|E(G(\ast))|}{2} - \frac{P}{4} +1\Big) \\ &= |V(G(\ast))| - \frac{|E(G(\ast))|}{2} - \frac{P}{4} +1. \end{aligned}$$

Solving in terms of $g$ gives the desired result. 
\end{proof}

Tubing and pasting arguments similar to the ones implemented in White's work \cite{Wh1970}, could possibly be used to obtain upper bounds on the genus of any $k$-dimensional grid graph. Here, we provide a quick (ineffective) upper bound that certainly could be improved upon.

\begin{lemma}
\label{lem:badgenusupperbound}
Let $G(\al_1, \ldots, \al_k)$ be any $k$-dimensional grid graph. Then
$$ \gamma(G(\al_1, \ldots, \al_k)) \leq \gamma(G(\al_1 -1, \al_2, \ldots, \al_k)) + \gamma(G(\al_2, \ldots, \al_k)) + \Big[\prod_{i=2}^{k} (\al_i +1) \Big] -1.$$
\end{lemma}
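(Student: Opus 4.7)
The plan is to generalize the tubing and pasting argument used in the proof of Proposition \ref{prop:3dcase3} to $k$-dimensional grid graphs. First I would partition the vertices of $G(\al_1, \ldots, \al_k)$ into two sets: $I = \{(\beta_1, \ldots, \beta_k) \in V(G(\ast)) : 0 \leq \beta_1 \leq \al_1 - 1\}$ and $J = \{(\al_1, \beta_2, \ldots, \beta_k) \in V(G(\ast)) : 0 \leq \beta_i \leq \al_i \text{ for } i \geq 2\}$. The induced subgraphs $G_I$ and $G_J$ are isomorphic to $G(\al_1 - 1, \al_2, \ldots, \al_k)$ and $G(\al_2, \ldots, \al_k)$, respectively. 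Setting $E_B = E(G(\ast)) \setminus (E(G_I) \cup E(G_J))$ gives the decomposition $G(\ast) = G_I \cup G_J \cup E_B$, and a direct count shows $|E_B| = \prod_{i=2}^{k} (\al_i + 1)$, since each vertex $(\al_1, \beta_2, \ldots, \beta_k) \in J$ is joined to exactly one vertex $(\al_1 - 1, \beta_2, \ldots, \beta_k) \in I$.

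Next, I would take minimal embeddings $(G_I, S_I)$ and $(G_J, S_J)$ on surfaces of genus $\gamma(G_I)$ and $\gamma(G_J)$, respectively. By Youngs' theorem, both embeddings are $2$-cell, and in particular every vertex of $G_I$ (resp.\ $G_J$) lies on the boundary of at least one face of its $2$-cell structure. For each edge $e \in E_B$ with endpoints $v_j \in J$ and $v'_j \in I$, I would select a face incident to $v_j$ in $(G_J, S_J)$ and a face incident to $v'_j$ in $(G_I, S_I)$, remove a small open disk from each (chosen small enough that the disks selected for distinct edges of $E_B$ are pairwise disjoint), and glue a cylindrical handle between the two boundary circles. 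Along this handle I would route a disjoint arc that plays the role of the edge $e$ in the embedding.

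After performing this construction for all $|E_B|$ edges of $E_B$, the resulting surface $S$ contains an embedded copy of $G(\ast)$. The first handle attached merely joins the two disjoint surfaces $S_I$ and $S_J$ into a single connected surface of genus $\gamma(G_I) + \gamma(G_J)$ (effectively a topological connected sum), while each of the remaining $|E_B| - 1$ handles increases the genus by exactly one. Therefore the embedding yields
\[
\gamma(G(\ast)) \leq \gamma(S) = \gamma(G_I) + \gamma(G_J) + |E_B| - 1,
\]
which is precisely the claimed inequality.

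The main obstacle is the geometric bookkeeping needed to guarantee that all handle attachments are performed disjointly and that each edge of $E_B$ can be routed along its assigned handle without introducing crossings with either $G_I$, $G_J$, or other routed arcs. This is dealt with uniformly by shrinking the removed disks in advance and exploiting the fact that, in a $2$-cell embedding, each vertex is accessible from the interior of at least one incident face, so small neighborhoods of the required attaching points can always be chosen pairwise disjoint.
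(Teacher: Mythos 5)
Your proposal is correct and follows essentially the same argument as the paper: the identical partition into $I$ and $J$, the same induced subgraphs $G_I \cong G(\al_1-1,\al_2,\ldots,\al_k)$ and $G_J \cong G(\al_2,\ldots,\al_k)$, the same bijection between $E_B$ and $V(G_J)$, and the same handle-per-edge tubing construction with the first handle acting as a connected sum. Your added care about Youngs' theorem and disjointness of the attaching disks only makes explicit what the paper's proof leaves implicit.
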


\begin{proof}
Given a $k$-dimensional grid graph $G(\ast)= G(\al_1, \ldots, \al_k)$, let $$I = \{ (\beta_1, \beta_2, \ldots, \beta_k)  :  0 \leq \beta_1 \leq \al_1 -1, 0 \leq \beta_i, \leq \al_i, \hspace{0.05in} \text{for} \hspace{0.05in} i=2,\ldots, k \} \subset V(G(\ast))$$ and let $$J = \{ (\al_1, \beta_2,\ldots,  \beta_k)  :  0 \leq \beta_i \leq \al_i, \text{for} \hspace{0.05in} i=2,\ldots, k \} \subset V(G(\ast)).$$ 
Consider the induced subgraphs $G_I$ and $G_J$ of $G(\ast)$. It is easy to see that $G_I \cap G_J = \emptyset$,  $V(G(\ast)) = V(G_I) \cup V(G_J)$,  $G_I \cong G(\al_1 -1, \al_2, \ldots, \al_k)$,  and $G_J \cong G(\al_2, \ldots, \al_k)$. Let $E_B = E(G(\ast)) \setminus \Big(E(G_I) \cup E(G_J)\Big)$. By construction, we have that $G(\ast) = G_I \cup G_J \cup E_B$. 

Now, $2$-cell embed $G_I$ on a surface $S_I$ of genus $\gamma(G(\al_1 -1, \al_2, \ldots, \al_k))$ and $2$-cell embed $G_J$ on a surface $S_J$ of genus $\gamma(G(\al_2, \ldots, \al_k))$. Since there is a bijective correspondence between the set of vertices of $G_J \cong G(\al_2, \ldots, \al_k)$ and the set of edges $E_B$, we have that $|E_B| = \prod_{i=2}^{k} (\al_i +1)$. In a small neighborhood of each vertex on $(G_J, S_J)$, attach one end of a handle (a copy of $\mathbb{S}^{1} \times [0,1]$) along with the corresponding edge from $E_B$ that connects to this vertex and attach the other end of the handle to the corresponding vertex on $(G_I, S_I)$. This will provide an embedding of $G(\ast)$ on a surface $S$, obtained from attaching $\Big[\prod_{i=2}^{k} (\al_i +1) \Big]$ handles between $S_I$ and $S_J$, and so, will have the desired genus. 
\end{proof}

Given any $k$-dimensional grid graph $G(\ast)$, Lemma \ref{lem:badgenusupperbound} could be applied successively to obtain an upper bound on $\gamma(G(\ast))$ in terms of the genera of subgraphs of $G(\ast)$, all of which are either $3$-dimensional grid graphs or grid graphs with at least three odd grid parameters. From here, one could apply Proposition \ref{prop:3dcase3} and  Theorem \ref{thm:kdimgenus1} to obtain an upper bound on $\gamma(G(\ast))$ that is exclusively a function of its grid parameters. As mentioned before the proof, this upper bound is  ineffective and could be improved by embedding more edges from $E_B$ on each handle. Determining an efficient scheme for distributing edges from $E_B$ onto handles would require more explicit details on the embeddings of $(G_I, S_I)$ and $(G_J, S_J)$.


\section{The Genus of a non-quadrilateral grid graph}
\label{subsec:lowerbounds}

We now discuss and apply   tools needed to determine a sharp lower bound on the genera of infinitely many  non-quadrilateral grid graphs. 

A \textbf{block} of a graph $G$ is a maximal $2$-connected subgraph $B$ of $G$. Given a connected graph $G$, there exists a unique collection of blocks $\mathcal{B} = \{ B_1, \ldots, B_k \}$ such that $\cup_{i=1}^{k} B_i = G$, which is called the \textbf{block decomposition} of $G$ in \cite{BHKY1962}.  The work of  Battle--Harary--Kodama--Youngs \cite{BHKY1962} provides a useful characterization of the genus of a graph in terms of its block decomposition. 

\begin{theorem}\cite{BHKY1962}
\label{thm:Blocks}
If $G$ is a graph with block decomposition  $ \{ B_1, \ldots , B_k \}$, then $\g(G) = \sum_{i=1}^k \g(B_i)$. Furthermore, if  $G$ is a graph with $m$ components $G_{1}, \ldots, G_{m}$, then $\g(G) = \sum_{i=1}^{m} \gamma(G_{m})$. 
\end{theorem}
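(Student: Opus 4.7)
The plan is to prove both additivity statements by induction, reducing in each case to a separation lemma for a union along either no shared vertex or a single shared vertex. For the components case, if $G = H_1 \sqcup \cdots \sqcup H_m$, the upper bound $\gamma(G) \leq \sum_i \gamma(H_i)$ follows by embedding each $H_i$ minimally on $S_{g_i}$ and forming the connected sum of these surfaces, with $H_i$ sitting inside the $i$-th summand. For the lower bound, given an embedding of $G$ on $S_g$, one finds a simple closed curve in $S_g$ that is disjoint from $G$ and separates $H_1$ from $H_2 \cup \cdots \cup H_m$ (possible because the components are disjoint compact subsets of the surface), cuts along it, and caps both resulting boundary circles with disks. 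A routine Euler characteristic count shows the two new closed surfaces have genera summing to exactly $g$, and induction on $m$ finishes the argument.

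For the blocks statement, I would induct on $k$ using the block-cut tree: pick a leaf block $B_k$ meeting the rest of $G$ at a single cut vertex $v$, and let $G'$ be the subgraph formed by the remaining blocks. Then $G = G' \cup B_k$ with $G' \cap B_k = \{v\}$, so it suffices to show $\gamma(G) = \gamma(G_1) + \gamma(G_2)$ whenever $G = G_1 \cup G_2$ with $G_1 \cap G_2 = \{v\}$. The upper bound comes from vertex amalgamation: take minimal embeddings $(G_i, S_{\gamma(G_i)})$, remove a small open disk around $v$ from each surface, and glue the two boundary circles together, identifying the two copies of $v$ on the resulting seam. Euler characteristic bookkeeping shows the new closed surface has genus $\gamma(G_1) + \gamma(G_2)$, and $G$ embeds on it naturally.

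The lower bound $\gamma(G) \geq \gamma(G_1) + \gamma(G_2)$ is the deep part of the theorem. Given any $2$-cell embedding of $G$ on $S_g$, the goal is to produce a simple closed curve $C$ that passes through $v$, is otherwise disjoint from $G$, and separates $G_1 \setminus \{v\}$ from $G_2 \setminus \{v\}$ on $S_g$. Once such a $C$ exists, cutting along $C$ and capping the two resulting boundary circles with disks yields two closed surfaces whose genera sum to exactly $g$ and which contain $G_1$ and $G_2$ respectively, giving $\gamma(G_1) + \gamma(G_2) \leq g$.

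The main obstacle is the existence of such a separating curve. In the given embedding, the edges incident to $v$ appear in some cyclic order, and the $G_1$-edges may be interleaved with the $G_2$-edges, in which case no separating curve through $v$ exists in the present embedding. The resolution is to modify the rotation system at $v$ so that all $G_1$-edges occupy one consecutive arc and all $G_2$-edges occupy the complementary arc, and then to argue that this re-arrangement does not increase the genus. The key combinatorial input is that $v$ is a cut vertex: since any closed walk bounding a face of the embedding that leaves $v$ along a $G_1$-edge must return to $v$ along a $G_1$-edge, and similarly for $G_2$, one can trace the effect of the rotation swap on the face count and conclude via Euler's formula that the genus is non-increasing. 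This rotation-surgery argument is the technical heart of the Battle--Harary--Kodama--Youngs theorem.
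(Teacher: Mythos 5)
First, a point of comparison: the paper does not prove this statement at all --- it is imported verbatim from Battle--Harary--Kodama--Youngs \cite{BHKY1962} and used as a black box --- so your attempt has to be judged on its own merits, and on those merits it has two genuine gaps. The first is in the components case: your lower bound rests on the claim that a simple closed curve disjoint from $G$ and separating $H_1$ from $H_2 \cup \dots \cup H_m$ exists ``because the components are disjoint compact subsets of the surface.'' That is false as a topological statement. Two disjoint parallel essential circles on a torus are disjoint compact connected sets, yet every separating simple closed curve on the torus bounds a disk on one side, and an essential circle cannot lie in a disk, so no such curve separates them. Thus disjointness and compactness buy you nothing, and proving that a separating curve exists for a \emph{minimal} embedding is essentially the theorem itself. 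The standard repair is to cut instead along the entire system of boundary circles of a regular neighborhood $N$ of $H_1$ chosen disjoint from the other components, and cap every resulting boundary circle: since cutting along $c$ disjoint circles yields at most $c+1$ pieces, an Euler characteristic count shows the genera of the capped pieces sum to at most $g$, while $H_1$ and the remaining components land in distinct pieces; this gives the lower bound without ever producing a single separating curve.

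The second gap is in the blocks case, where your skeleton (reduction via the block--cut tree to a one-vertex amalgamation, upper bound by vertex amalgamation, and the diagnosis that interleaving of $G_1$- and $G_2$-edges in the rotation at the cut vertex $v$ is the obstacle) does match the classical proof --- but the decisive step, that re-sorting the rotation at $v$ to make the $G_1$-edges consecutive never increases the genus, is asserted rather than proven, and it is exactly the hard content of the theorem. To make this concrete: if $\Pi$ is the given rotation system with face count $F$, and $\Pi_1, \Pi_2$ are its restrictions to $G_1, G_2$ with face counts $F_1, F_2$, then the de-interleaved embedding has face count $F_1 + F_2 - 1$, so your claim is equivalent to the inequality $F \leq F_1 + F_2 - 1$. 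Your observation that a facial walk leaving $v$ into $G_1$ must return to $v$ through $G_1$ is indeed the right structural input (it says the map sending each outgoing edge at $v$ to the incoming edge terminating its facial arc preserves the two sides), but deducing the face-count inequality from it is a genuine combinatorial argument about cycles of composed permutations on the edge-ends at $v$, not routine Euler-formula bookkeeping; as written, your proposal establishes the easy inequality in both halves of the theorem and leaves the hard inequality unproven.
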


The following graphs arise as minors of grid graphs analyzed in this section.   

\begin{definition}
\label{defn:Tn}
Let $T_n$ denote the graph constructed by gluing $n$ copies of $K_{3,3}$ along vertices as seen in Figure \ref{KnfromK33s}.
\end{definition}
\begin{figure}[h!]
\centering
\includegraphics[width=\textwidth]{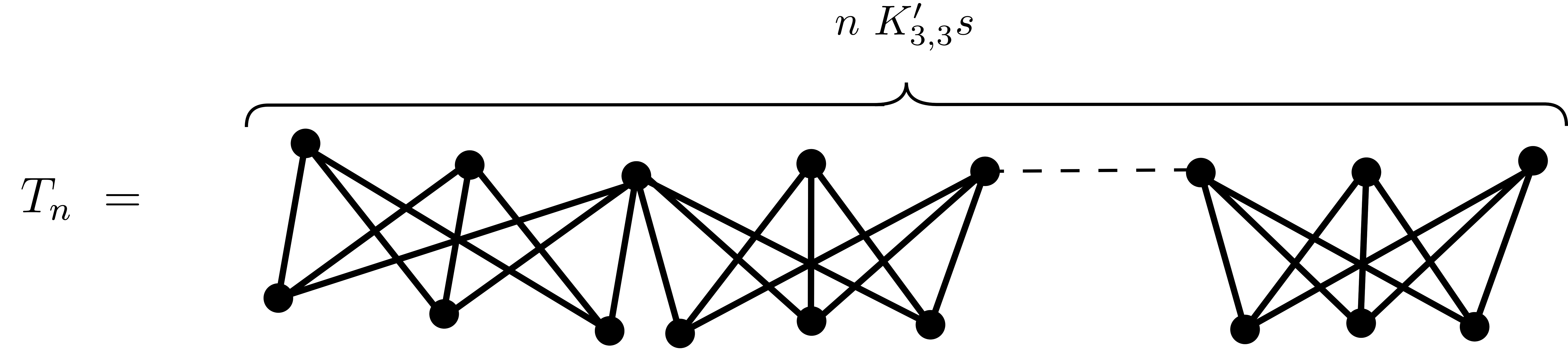}
\caption{}
\label{KnfromK33s}
\end{figure}

\begin{lemma}
\label{lem:Kngraph}
For each $n \in \mathbb{N}$, $T_n$ consists of $n$ blocks of $K_{3,3}$, and $\gamma(T_n) = n$. 
\end{lemma}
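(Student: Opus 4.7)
The plan is to leverage Theorem \ref{thm:Blocks} directly once the block decomposition of $T_n$ is identified, so most of the work is combinatorial/topological in confirming the structure of $T_n$ rather than a new genus computation.

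First I would establish the block decomposition. Since $T_n$ is built by iteratively gluing copies of $K_{3,3}$ along single vertices (as indicated in Figure \ref{KnfromK33s}), the gluing vertices are precisely the cut vertices of $T_n$. Each individual copy of $K_{3,3}$ is $2$-connected (a standard fact, since removing any single vertex of $K_{3,3}$ leaves a connected graph), and no edges are added between distinct copies. Therefore each copy of $K_{3,3}$ is a maximal $2$-connected subgraph of $T_n$, and no larger $2$-connected subgraph can contain a cut vertex as an interior vertex. This shows $\{B_1, \ldots, B_n\}$, with each $B_i \cong K_{3,3}$, is the block decomposition of $T_n$, giving the first claim.

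Next I would invoke Theorem \ref{thm:Blocks} (the block additivity of genus from Battle--Harary--Kodama--Youngs) to conclude
\[ \gamma(T_n) = \sum_{i=1}^{n} \gamma(B_i) = \sum_{i=1}^{n} \gamma(K_{3,3}). \]
The classical result $\gamma(K_{3,3}) = 1$ (it is non-planar by Kuratowski, and embeds on the torus) then finishes the computation, yielding $\gamma(T_n) = n$.

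The main obstacle, such as it is, is really just being careful about the first step: one needs to verify that gluing $K_{3,3}$'s only along single vertices (and not, say, along edges or along multiple vertices in a way that could merge two copies into a larger $2$-connected subgraph) produces the block structure claimed. Once this is confirmed from the construction in Figure \ref{KnfromK33s}, the rest is a clean application of Theorem \ref{thm:Blocks}. No further embedding constructions or minor-based lower bounds are required, since block additivity handles both the upper and lower bound simultaneously.
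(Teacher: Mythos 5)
Your proof is correct and follows essentially the same route as the paper: the paper's proof is a one-liner invoking Theorem \ref{thm:Blocks} together with $\gamma(K_{3,3})=1$, treating the block structure of $T_n$ as evident from the construction. Your additional verification that the vertex-gluings make each copy of $K_{3,3}$ a block is a careful filling-in of that implicit step, not a different approach.
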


\begin{proof}
This lemma follows directly from Theorem \ref{thm:Blocks} and the fact that $\gamma(K_{3,3})=1$. 
\end{proof}


We now plan to use Lemma \ref{lem:Kngraph} to help find the genera of an infinite family of grid graphs that do not admit quadrilateral embeddings in Theorem \ref{thm:genus}. First, we need a few additional lemmas to deal with special cases that arise in the proof of Theorem \ref{thm:genus}.

\begin{figure}[ht]
\centering
\includegraphics[width=\textwidth]{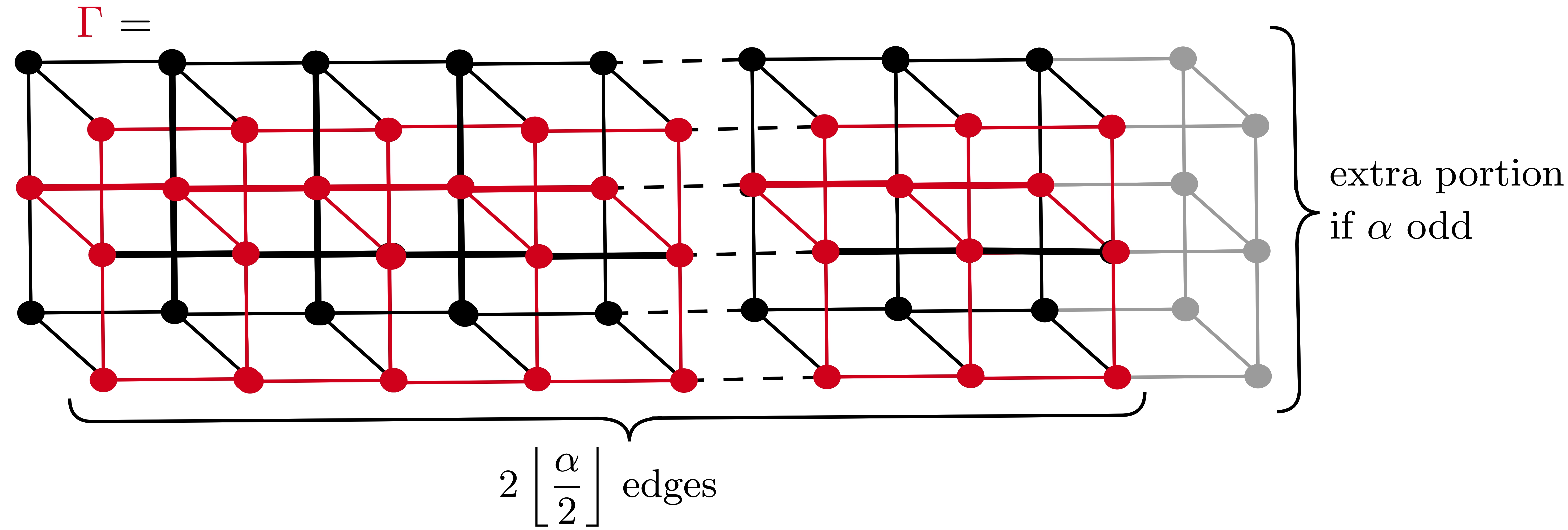}
\caption{Subgraph $\G$ of $G(\al,2,1)$ colored in red.}
\label{fig:SubG-in-Ga21}
\end{figure}

\begin{lemma}
\label{lem:genuslowerbound}
For any $\alpha \in \mathbb{N}$ where $\alpha\geq 2$, we have $\g(G(\al,2,1))  \geq \g(\Gamma) \geq \floor*{\frac{\alpha}{2}}$, where $\Gamma$ is the subgraph of $G(\al, 2, 1)$ shown in Figure \ref{fig:SubG-in-Ga21}.
\end{lemma}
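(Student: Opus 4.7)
The first inequality $\gamma(G(\alpha,2,1)) \geq \gamma(\Gamma)$ is immediate, since genus is monotone under taking subgraphs: any embedding of $G(\alpha,2,1)$ on $S_g$ restricts to an embedding of $\Gamma$ on $S_g$.

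For the second inequality, my plan is to exhibit a block decomposition of $\Gamma$ into $\lfloor \alpha/2 \rfloor$ pieces, each of which contains a subdivision of $K_{3,3}$. Reading Figure \ref{fig:SubG-in-Ga21}, the subgraph $\Gamma$ appears to be built by concatenating ``double-slab'' pieces: for each $m = 0, 1, \ldots, \lfloor \alpha/2 \rfloor - 1$, the portion of $\Gamma$ sitting inside the sub-grid $G(\alpha,2,1)$ with first coordinate in $\{2m, 2m+1, 2m+2\}$ forms one such piece $B_m$, and consecutive pieces $B_m$ and $B_{m+1}$ share exactly a single vertex lying in the plane $\{x_1 = 2m+2\}$. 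Thus each shared vertex is a cut vertex of $\Gamma$, and the $B_m$'s are precisely the blocks of $\Gamma$ (once one also checks each $B_m$ is $2$-connected, which should follow by inspection since each piece contains multiple internally disjoint paths between any two of its vertices).

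The heart of the argument is then showing $\gamma(B_m) \geq 1$ for each $m$. My plan is to identify a topological $K_{3,3}$ inside each $B_m$: choose three vertices in the $x_1 = 2m$ slab and three in the $x_1 = 2m+2$ slab (for instance, $(2m, j, k_j)$ and $(2m+2, j, k'_j)$ for suitable $j \in \{0,1,2\}$ and $k_j, k'_j \in \{0,1\}$, chosen so that the nine required paths can be routed without sharing internal vertices), and connect each pair by an internally disjoint path through the middle slab $x_1 = 2m+1$. Since this realizes $K_{3,3}$ as a subdivision in $B_m$, we obtain $\gamma(B_m) \geq \gamma(K_{3,3}) = 1$ by the standard fact that genus is monotone under topological minors.

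Finally, applying Theorem \ref{thm:Blocks} gives
\begin{equation*}
\gamma(\Gamma) \;=\; \sum_{m=0}^{\lfloor \alpha/2 \rfloor - 1} \gamma(B_m) \;\geq\; \lfloor \alpha/2 \rfloor,
\end{equation*}
which completes the proof. The main obstacle I expect is the bookkeeping in the second step: correctly identifying which six vertices of $B_m$ serve as the two parts of the $K_{3,3}$-subdivision, and explicitly exhibiting nine internally disjoint paths realizing the bipartite adjacency using only the edges present in $\Gamma$ (which is sparser than all of $G(\alpha,2,1)$ restricted to the double slab). Once the routing is written down for a single $B_m$, the argument carries over verbatim to every block by translation in the $x_1$-direction.
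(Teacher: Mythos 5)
Your first inequality (subgraph monotonicity of genus) is fine, and your overall instinct -- reduce to Theorem \ref{thm:Blocks} plus $\gamma(K_{3,3})=1$ -- is the right one, but the step you yourself call the heart of the argument cannot be carried out. You propose a $K_{3,3}$-subdivision in each double slab whose parts are three branch vertices in the plane $x_1=2m$ and three in the plane $x_1=2m+2$, joined by nine internally disjoint paths. Every edge of $G(\alpha,2,1)$ changes exactly one coordinate by exactly one, so any path from a vertex with $x_1=2m$ to a vertex with $x_1=2m+2$ must contain a vertex with $x_1=2m+1$, and since your branch vertices all lie in the two outer planes, that vertex is internal to the path. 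Internally disjoint paths therefore need nine pairwise distinct vertices in the plane $x_1=2m+1$, but that plane contains only $3\cdot 2=6$ vertices of $G(\alpha,2,1)$. Hence no such routing exists, for any choice of the six branch vertices and regardless of how far the paths are allowed to wander: the bipartition of a $K_{3,3}$-subdivision simply cannot be split across the two boundary planes of a slab. Any correct construction must spread the branch sets differently (for example, taking one side to be rows running the length of a slab in the bottom layer and the other side to be columns in the top layer, with adjacencies realized by vertical edges), and then the two sides are no longer separated in the $x_1$-direction.

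There is a second gap, which is why the paper's proof looks different from yours. You assert that consecutive double slabs of $\Gamma$ meet in a single vertex, so that these pieces are the blocks of $\Gamma$ and Theorem \ref{thm:Blocks} can be applied to $\Gamma$ itself; this is not established, and the paper neither claims nor needs it. The paper instead contracts prescribed connected subgraphs of $\Gamma$ (Figure \ref{fig:Ka-in-subG}) to obtain the graph $T_a$ of Definition \ref{defn:Tn} as a \emph{minor}, and then invokes minor-monotonicity of genus together with Lemma \ref{lem:Kngraph}; the block decomposition is applied to $T_a$, whose blocks genuinely are copies of $K_{3,3}$, not to $\Gamma$. The contraction is what manufactures the cut vertices: the gluing vertices of $T_a$ have degree $6$, while every vertex of $G(\alpha,2,1)$ has degree at most $5$, so the chain of $K_{3,3}$'s joined at single vertices cannot occur inside the grid as a subgraph or subdivision, and in $\Gamma$ the junction between consecutive pieces is a connected set attached on each side at several vertices, so $\Gamma$ need not have any cut vertices there. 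To repair your proof you would need both (i) a correct $K_{3,3}$-subdivision or minor in each slab, and (ii) either a verification of the cut-vertex claim for the actual $\Gamma$ of Figure \ref{fig:SubG-in-Ga21}, or, as the paper does, a contraction performed first so that the block argument is applied to the contracted graph.
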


\begin{proof}

Let $\alpha\in \N$ be arbitrary and let $a = \floor*{\frac{\alpha}{2}}$. The graph $G(\al, 2, 1)$ is depicted in Figure \ref{fig:SubG-in-Ga21} with a subgraph  $\Gamma$ highlighted. We can obtain a specific minor of $\G$ by contracting certain vertices (see Figure \ref{fig:Ka-in-subG}).

\begin{figure}[ht]
\centering
\includegraphics[width=\textwidth]{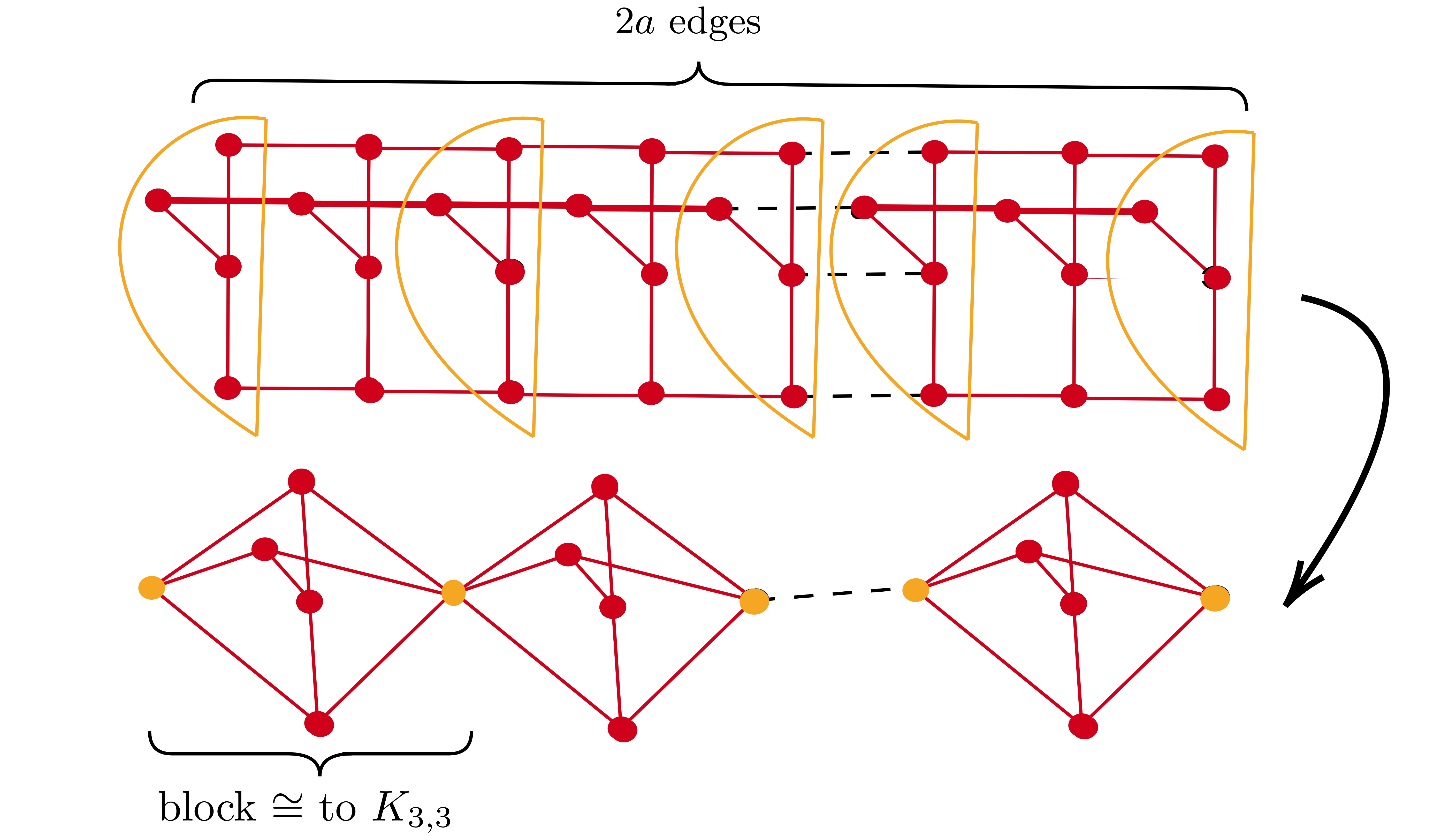}
\caption{Orange contracted vertices of $\G$ to get $T_a$ .}
\label{fig:Ka-in-subG}
\end{figure}
However, this minor is equivalent to $T_a$ as defined in Definition \ref{defn:Tn}. It then follows from Lemma \ref{lem:Kngraph} that $\g(T_a) = a$. So then we can deduce that $\g((G(\al, 2, 1)) \geq \g(\Gamma)\geq \g(T_a) = a$, as desired. 
\end{proof}

\begin{lemma}
\label{lem:Da11}
For any $\al \in \mathbb{N}$, $\gamma(G(\al,1,1)) = 0$, i.e., $G(\al, 1, 1)$ is planar.
\end{lemma}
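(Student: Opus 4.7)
The plan is to give an explicit planar embedding of $G(\alpha,1,1)$; combined with the trivial lower bound $\gamma(G(\alpha,1,1)) \ge 0$, this yields the result.

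First, I would observe that $G(\alpha,1,1)$ decomposes naturally into $\alpha+1$ pairwise vertex-disjoint $4$-cycles $C_0, C_1, \ldots, C_\alpha$, where $C_i$ is the induced subgraph on $\{(i,j,k) : j,k \in \{0,1\}\}$, together with $4\alpha$ ``rung'' edges, each connecting some $(i,j,k) \in C_i$ to the corresponding $(i+1,j,k) \in C_{i+1}$.

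Next, I would embed these pieces in $\mathbb{R}^2$ as follows: place the four vertices of $C_i$ at the corners of an axis-aligned square $Q_i$ centered at the origin, with side lengths chosen so that $Q_0 \supset Q_1 \supset \cdots \supset Q_\alpha$ is a strictly nested family of concentric squares. Draw each edge of $C_i$ along the corresponding side of $Q_i$, and draw each rung between corresponding corners of $Q_i$ and $Q_{i+1}$ as the straight-line segment joining them (which lies along one of the four diagonal rays from the origin through the corners). No two edges cross: sides of distinct $C_i$'s lie on disjoint concentric squares, and rungs along the same diagonal ray meet only at shared corner endpoints. This produces a planar embedding, hence $\gamma(G(\alpha,1,1)) \le 0$, completing the proof.

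I do not anticipate any real obstacle; the argument is essentially the description of a single picture, and crossing-freeness is immediate from the concentric-and-radial structure. As a sanity check, this embedding is in fact quadrilateral: plugging $|V| = 4(\alpha+1)$ and $|E| = 8\alpha + 4$ (from Lemma~\ref{prop:verticesforgrids}) into the equality case of Corollary~\ref{cor:Eulergirth} gives $1 + \tfrac{|E|}{4} - \tfrac{|V|}{2} = 0$, consistent with a quadrilateral embedding on $S_0$.
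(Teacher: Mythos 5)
Your proof is correct and takes essentially the same approach as the paper: the paper views $G(\alpha,1,1)$ as the $1$-skeleton of a square-based cylinder in $\mathbb{R}^3$ and flattens it by a homeomorphism onto an annulus in the plane, and your nested concentric squares with radial rungs are precisely that annular picture, constructed directly in $\mathbb{R}^2$. Your crossing-freeness argument and the quadrilateral sanity check are both sound.
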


\begin{proof}
Embed $G(\al, 1,1)$ in $\mathbb{R}^{3}$ with vertices positioned at their respective coordinates and edges connecting vertices via straight line segments. Then it is easy to see that $G(\al,1,1)$ is the $1$-skeleton  of a  $2$-cell structure of a cylinder with a square base. Take a homeomorphism of this cylinder in $\mathbb{R}^{3}$ to an annulus in $\mathbb{R}^{2}$. Then the image of $\Gamma$ under this homeomorphism will provide an embedding of $G(\al, 1, 1)$ in the plane. 
\end{proof}

\begin{theorem}
\label{thm:genus}
For any $\al_1,\al_2 \in \mathbb{N}$ we have that

\begin{equation*}
\gamma(G(\al_1, \al_2, 1)) = \floor*{\frac{\al_1}{2}} \floor*{\frac{\al_2}{2}}.
\end{equation*}

In addition, infinitely many $G(\al_1, \al_2, 1)$ do not admit quadrilateral embeddings. 
\end{theorem}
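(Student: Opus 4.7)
For the upper bound I would split into three cases based on the parities of $\al_1, \al_2$ (the third parameter $\al_3 = 1$ is always odd). If both are odd, Proposition \ref{prop:3dalloddgenus} applies directly, and substituting the counts from Lemma \ref{prop:verticesforgrids} into $1 + |E|/4 - |V|/2$ simplifies to $(\al_1 - 1)(\al_2 - 1)/4 = \floor*{\al_1/2}\floor*{\al_2/2}$. If exactly one of $\al_1, \al_2$ is even, I would permute coordinates so the even parameter occupies the third slot, apply Proposition \ref{prop:3dcase3}(1) with the all-odd subcase providing the base of the recursion, and verify arithmetically that the bound collapses to $\floor*{\al_1/2}\floor*{\al_2/2}$. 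The both-even case is the same trick using Proposition \ref{prop:3dcase3}(2), after reordering so that the odd parameter $1$ appears first; again the algebra telescopes to $\al_1 \al_2 / 4 = \floor*{\al_1/2}\floor*{\al_2/2}$.

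For the lower bound I plan to exhibit $T_n$, with $n = \floor*{\al_1/2}\floor*{\al_2/2}$, as a minor of $G(\al_1, \al_2, 1)$, so that Lemma \ref{lem:Kngraph} gives $\gamma(G(\al_1, \al_2, 1)) \geq \gamma(T_n) = n$. The construction generalizes the $\Gamma$-to-$T_a$ contraction in the proof of Lemma \ref{lem:genuslowerbound}. Decompose $G(\al_1, \al_2, 1)$ into $\floor*{\al_2/2}$ horizontal slabs isomorphic to $G(\al_1, 2, 1)$, indexed by $b = 0, \ldots, \floor*{\al_2/2} - 1$ and spanning the rows $y = 2b, 2b+1, 2b+2$, so that consecutive slabs share only the row $y = 2b+2$. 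Inside each slab place a copy of the subgraph $\Gamma$ from Lemma \ref{lem:genuslowerbound} and perform the same vertex contractions as in that lemma, arranging the chosen contraction vertices along the shared boundary rows so that $K_{3,3}$ minors from different slabs meet only at single cut vertices. The resulting minor of $G(\al_1, \al_2, 1)$ is then $T_n$, as required.

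For the non-quadrilateral statement, Corollary \ref{cor:Eulergirth} says that $G(\al_1, \al_2, 1)$ admits a quadrilateral embedding if and only if its genus equals $1 + |E|/4 - |V|/2$, which a short calculation using Lemma \ref{prop:verticesforgrids} reduces to $(\al_1-1)(\al_2-1)/4$. For infinitely many pairs --- for instance $\al_1 = 2$ and $\al_2 = 2k$ with $k \geq 1$ --- this value is not an integer, so it cannot equal the integer-valued genus and no quadrilateral embedding exists. The main obstacle in the overall argument is the lower bound construction, specifically engineering the slab layout so that adjacent slabs share only cut vertices (never any edges) and the block decomposition really produces $\floor*{\al_1/2}\floor*{\al_2/2}$ distinct $K_{3,3}$ blocks; accidentally sharing an edge or a $2$-connected piece across a slab boundary would merge two blocks and undercount the genus.
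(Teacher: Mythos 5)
Your upper-bound case analysis and your non-quadrilateral argument are correct and follow the paper's own route (Proposition \ref{prop:3dalloddgenus} together with Proposition \ref{prop:3dcase3} for the upper bound, Corollary \ref{cor:Eulergirth} for the quadrilateral obstruction; your non-integrality trick with $(\al_1-1)(\al_2-1)/4$ is a fine variant of the paper's strict-inequality argument). The gap is in the lower bound, and it is precisely the step you flag but never carry out. First, note that any $K_{3,3}$ minor inside a single slab must use vertices of \emph{both} boundary rows of that slab: deleting a boundary row of $G(\al_1,2,1)$ leaves $G(\al_1,1,1)$, which is planar by Lemma \ref{lem:Da11}. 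So consecutive slabs necessarily compete for the shared row $y=2b+2$, and if you ``perform the same vertex contractions as in that lemma'' in identically placed copies, the branch sets of the two slabs' minors \emph{overlap} along that row rather than meeting in single vertices; the ``arranging'' you invoke is the entire content of the proof, and your sketch does not produce it. Worse, the success criterion you state is not sufficient even if achieved: if slab $b$'s $i$-th $K_{3,3}$ meets slab $b{+}1$'s $i$-th $K_{3,3}$ in one vertex for two different values of $i$, then the union contains a cycle running through edges of all four copies (along slab $b$'s chain, down to slab $b{+}1$, back along its chain, and up again). Since every cycle lies inside a single block, those copies merge into one block, and Theorem \ref{thm:Blocks} then tells you nothing about that block's genus --- genus additivity is a statement about blocks and components only, and it fails in general for amalgamations over two or more vertices. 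What your argument actually requires is that the block-incidence structure of all $\floor*{\frac{\al_1}{2}}\floor*{\frac{\al_2}{2}}$ copies is a tree (for instance, literally $T_n$, with the slab chains joined end-to-end), and nothing in the proposal guarantees this.

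The paper's proof sidesteps all of this: it never glues the slab minors together. Instead it arranges the $\floor*{\frac{\al_2}{2}}$ copies of $\Gamma$ inside the overlapping slabs so that they are pairwise \emph{vertex-disjoint} (this is what its figure exhibits), and then applies the second half of Theorem \ref{thm:Blocks}: genus is additive over connected components. A disjoint union of $\floor*{\frac{\al_2}{2}}$ subgraphs, each of genus at least $\floor*{\frac{\al_1}{2}}$ by Lemma \ref{lem:genuslowerbound}, immediately gives $\gamma(G(\al_1,\al_2,1)) \geq \floor*{\frac{\al_1}{2}}\floor*{\frac{\al_2}{2}}$, with no cut-vertex engineering and no identification of the union with $T_n$. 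To repair your write-up, drop the cut-vertex requirement and instead do the one piece of real work: exhibit the pairwise disjoint copies of $\Gamma$ explicitly, which requires modifying or offsetting the copies near the shared rows, since (as above) identical stacking forces overlaps.
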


\begin{proof}
 To start,  if $\al_1= 1$ or $\al_2=1$, then $G(\al_1,\al_2,1) \cong G(\al_1,1,1)$ embeds in the plane by Lemma \ref{lem:Da11}. Thus,  $\g(G(\al_1,1,1))= 0 = \floor*{\frac{\alpha_1}{2}} \floor*{\frac{1}{2}}$, as needed. 

Now suppose $\al_1,\al_2\geq 2$. Using the subgraph $\G$ of $G(\al,2,1)$ described in Lemma \ref{lem:genuslowerbound}, we can glue together $\floor*{\frac{\al_2}{2}}$ copies of $G(\al,2,1)$ while taking care to keep the subgraphs $\G$ in each copy of $G(\al,2,1)$ disjoint, to get a subgraph of $G(\al_1,\al_2,1)$; see Figure \ref{fig:Glued-Da21-in-Dab1}. Since $\g(\G)\geq \floor*{\frac{\al_1}{2}}$ by Lemma \ref{lem:genuslowerbound} and we have $\floor*{\frac{\al_2}{2}}$ disjoint copies of $\G$ in $G(\al_1,\al_2,1)$, we can deduce that $\g(G(\al_1,\al_2,1))\geq \floor*{\frac{\al_1}{2}}\floor*{\frac{\al_2}{2}}$. 

\begin{figure}[ht]
\centering
\includegraphics[width=\textwidth]{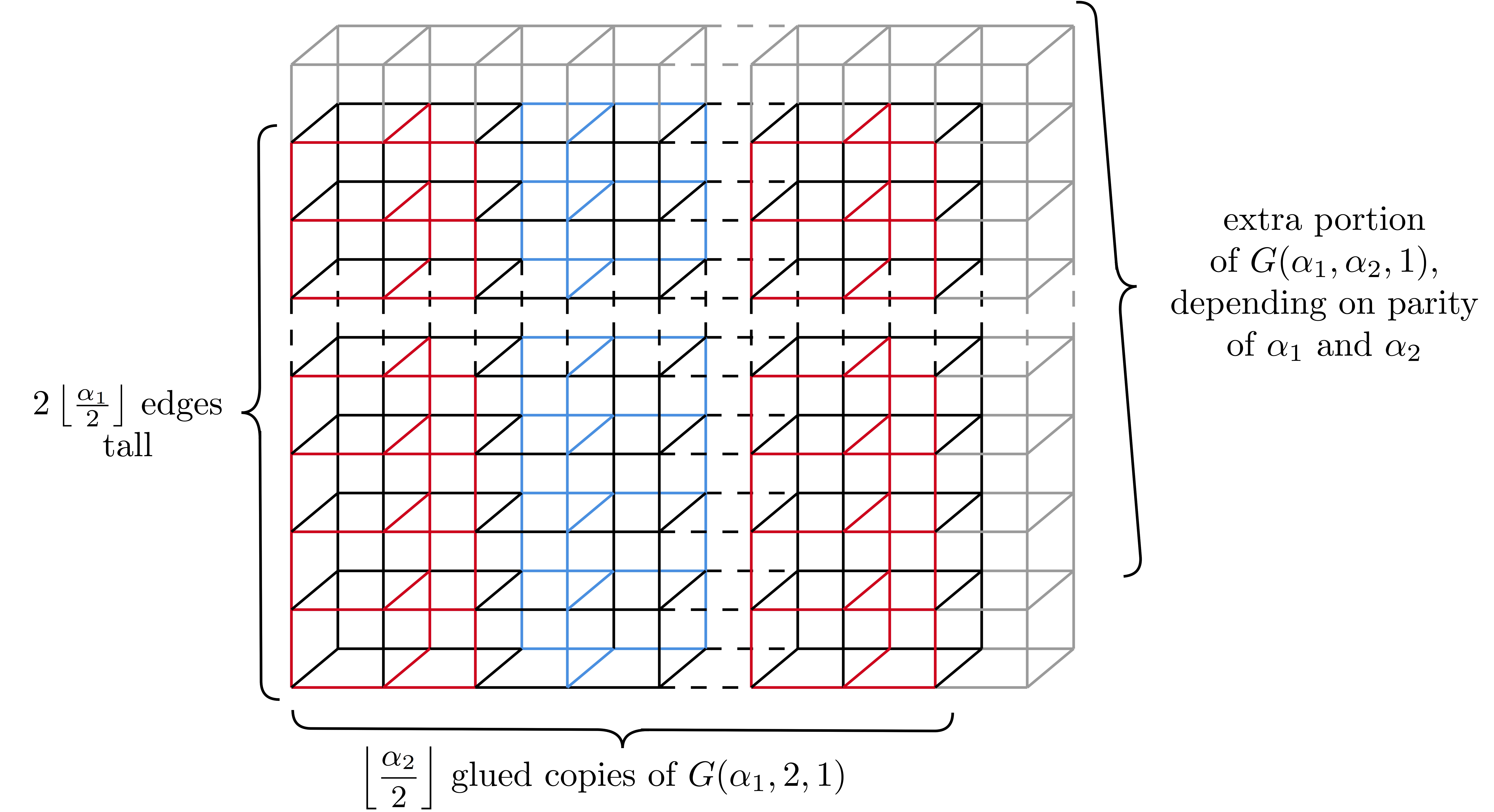}
\caption{Gluing of $\floor*{\frac{\al_2}{2}}$ copies of $G(\al_1,2,1)$ contained inside of $G(\al_1, \al_2, 1)$. Each copy of $G(\al_1, 2, 1)$ contains a subgraph isomorphic to $\Gamma$ (colored in red and blue) and these copies of $\Gamma$ are pairwise disjoint as subgraphs of $G(\al_1, \al_2, 1)$.}
\label{fig:Glued-Da21-in-Dab1}
\end{figure}

To show that $\g(G(\al_1,\al_2,1))\leq \floor*{\frac{\al_1}{2}}\floor*{\frac{\al_2}{2}}$, we can use Theorem \ref{thm:kdimgenus1} and Proposition \ref{prop:3dcase3}. This requires a few cases depending on the parity of $\al_1$ and $\al_2$. However, in all of these cases, the upper bounds on genus from  Theorem \ref{thm:kdimgenus1} and Proposition \ref{prop:3dcase3} are equivalent to $\floor*{\frac{\al_1}{2}}\floor*{\frac{\al_2}{2}}$. We leave the details of explicitly checking the algebra for the reader. 

It is easy to see that many of these grid graphs do not admit quadrilateral embeddings. Specifically, Corollary \ref{cor:Eulergirth} gives $\gamma(G(\al_1, \al_2, 1)) \geq \frac{1}{4}(\al_1 -1)(\al_2 -1)$, with equality if and only if the corresponding grid graph admits a  quadrilateral embedding, i.e., both $\al_1$ and $\al_2$ are odd in this case. Thus, if either $\al_1$ or $\al_2$ is even, then $G(\al_1,\al_2, 1)$ does not admit a quadrilateral embedding.
\end{proof}

The final paragraph of the proof of Theorem \ref{thm:genus}  shows that there exist grid graphs that are ``arbitrarily far away'' from having a quadrilateral embedding, which contrast the grid graphs studied by White in \cite{Wh1970}.  More generally, given a  graph $G$ with girth at least 4, define the \textbf{quadrilateral distance} of $G$ to be the quantity $$d_{Q}(G) = \gamma(G) - \Big(1 + \frac{|E(G)|}{4} - \frac{|V(G)|}{2}\Big).$$   Note that $d_{Q}(G) \geq 0$ and  $d_{Q}(G) = 0$ if and only if $G$ admits a quadrilateral embedding. The following corollary immediately follows from Theorem \ref{thm:genus}.

\begin{corollary}
Consider the sequence of grid graphs $\{ G(2n,2,1) \}_{n=1}^{\infty}$. Then $$d_{Q}(G(2n,2,1)) \rightarrow \infty \hspace{0.05in} \text{as} \hspace{0.05in} n \rightarrow \infty.$$ 
\end{corollary}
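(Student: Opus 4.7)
The plan is a direct computation using Theorem \ref{thm:genus} together with the vertex and edge counts from Lemma \ref{prop:verticesforgrids}. Since the genus of $G(2n,2,1)$ has already been pinned down exactly, the corollary reduces to showing that the ``quadrilateral lower bound'' $1 + |E|/4 - |V|/2$ grows strictly more slowly in $n$ than $\gamma(G(2n,2,1))$ does.

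First I would invoke Theorem \ref{thm:genus} with $\al_1 = 2n$ and $\al_2 = 2$ to record that
\begin{equation*}
\gamma(G(2n,2,1)) \;=\; \left\lfloor \tfrac{2n}{2}\right\rfloor \left\lfloor \tfrac{2}{2}\right\rfloor \;=\; n.
\end{equation*}
Next I would apply Lemma \ref{prop:verticesforgrids} to obtain
\begin{equation*}
|V(G(2n,2,1))| \;=\; (2n+1)(3)(2) \;=\; 12n+6,
\end{equation*}
and
\begin{equation*}
|E(G(2n,2,1))| \;=\; 6(2n+1)\!\left(\tfrac{2n}{2n+1} + \tfrac{2}{3} + \tfrac{1}{2}\right) \;=\; 12n + 7(2n+1) \;=\; 26n+7.
\end{equation*}

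Plugging these into the definition of $d_Q$ gives
\begin{equation*}
d_Q(G(2n,2,1)) \;=\; n - \left(1 + \tfrac{26n+7}{4} - \tfrac{12n+6}{2}\right) \;=\; n - \tfrac{2n-1}{4} \;=\; \tfrac{2n+1}{4},
\end{equation*}
which clearly tends to infinity as $n \to \infty$. There is no real obstacle here; the only thing to be careful about is the arithmetic in simplifying the edge count, and in verifying that the quadrilateral lower bound given by Corollary \ref{cor:Eulergirth} is indeed genuinely smaller than $\lfloor \al_1/2 \rfloor \lfloor \al_2/2 \rfloor$ when $\al_2 = 2$ and $\al_1$ is even, which is precisely the non-quadrilateral regime already identified at the end of the proof of Theorem \ref{thm:genus}.
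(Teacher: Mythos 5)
Your proposal is correct and is essentially the paper's own argument: the paper states the corollary follows immediately from Theorem \ref{thm:genus}, meaning exactly the computation you perform, namely $\gamma(G(2n,2,1)) = n$ while the quadrilateral lower bound from Corollary \ref{cor:Eulergirth} equals $\tfrac{1}{4}(2n-1)(2-1) = \tfrac{2n-1}{4}$, so that $d_Q(G(2n,2,1)) = \tfrac{2n+1}{4} \to \infty$. Your arithmetic (vertex count $12n+6$, edge count $26n+7$, and the resulting value of $d_Q$) checks out.
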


We now determine the genera of a different infinite family of non-quadrilateral grid graphs.

\begin{theorem}
\label{thm:Da22}
For any $\alpha\in\N$, we have that $\gamma(G(\al, 2,2)) = \alpha$. Every $G(\al, 2,2)$ does not admit a quadrilateral embedding. 
\end{theorem}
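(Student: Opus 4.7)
The plan is to prove the three assertions separately: the upper bound $\gamma(G(\al, 2, 2)) \leq \al$, the lower bound $\gamma(G(\al, 2, 2)) \geq \al$, and the non-existence of a quadrilateral embedding.

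For the upper bound I would apply Proposition \ref{prop:3dcase3} together with Lemma \ref{lem:Da11}. If $\al$ is odd, then $G(\al, 2, 2)$ has exactly two even grid parameters, and case (2) of Proposition \ref{prop:3dcase3} yields
\[
\gamma(G(\al, 2, 2)) \leq \gamma(G(\al, 1, 1)) + \frac{(\al+1)(2+2)}{4} - 1 = 0 + (\al+1) - 1 = \al.
\]
If $\al$ is even, all three grid parameters are even, and case (3) gives
\[
\gamma(G(\al, 2, 2)) \leq \gamma(G(\al-1, 1, 1)) + \frac{\al \cdot 2 + \al \cdot 2 + 2 \cdot 2}{4} - 1 = (\al+1) - 1 = \al.
\]

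The main obstacle will be the lower bound. My plan is to mimic the proof of Theorem \ref{thm:genus} by exhibiting inside $G(\al, 2, 2)$ a minor of genus at least $\al$ and then invoking Theorem \ref{thm:Blocks} together with Lemma \ref{lem:Kngraph}. I would target a minor isomorphic to $T_\al$, the chain of $\al$ copies of $K_{3,3}$ glued at cut vertices, whose genus is $\al$. To construct it, I would split $G(\al, 2, 2)$ into the $\al$ overlapping slabs $S_i$ defined by first coordinate in $\{i, i+1\}$ for $i = 0, \ldots, \al - 1$; each $S_i$ is isomorphic to $G(1, 2, 2) \cong G(2, 2, 1)$, which has genus $1$ by Theorem \ref{thm:genus} and therefore contains a $K_{3,3}$ minor. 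The delicate combinatorial step is to choose the six branch sets of the $K_{3,3}$ minor inside each $S_i$ so that consecutive minors share exactly one branch set, namely a single vertex in the shared slice at first coordinate $i+1$; the union of these minors would then be the required $T_\al$ minor. An alternative is to exhibit $\al$ pairwise vertex-disjoint $K_{3,3}$ minors and invoke the second statement of Theorem \ref{thm:Blocks}, which is plausible since $G(\al, 2, 2)$ has $9(\al+1)$ vertices.

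For the non-quadrilateral claim, Corollary \ref{cor:Eulergirth} applied with $|V| = 9(\al+1)$ and $|E| = 21\al + 12$ forces any quadrilateral embedding to have genus $\frac{3\al - 2}{4}$, which is strictly less than $\al$ for every $\al \in \N$. This would contradict the lower bound just established, so no $G(\al, 2, 2)$ admits a quadrilateral embedding.
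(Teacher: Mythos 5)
Your upper bound and your non-quadrilateral argument are correct and coincide exactly with the paper's: cases (2) and (3) of Proposition \ref{prop:3dcase3} together with Lemma \ref{lem:Da11} give $\gamma(G(\alpha,2,2))\le\alpha$ in the two parity cases, and comparing $\alpha$ against the Corollary \ref{cor:Eulergirth} bound $(3\alpha-2)/4$ rules out quadrilateral embeddings. The lower bound is where you part ways with the paper, and it is where your proposal breaks down. The paper does not use a block decomposition at all: it exhibits a $K_{3,4\alpha}$ minor of $G(\alpha,2,2)$ (three long paths running in the $\alpha$-direction are contracted to form the $3$-side) and invokes the Ringel--Bouchet genus formula, $\gamma(K_{3,4\alpha})=\lceil(4\alpha-2)/4\rceil=\alpha$. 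Tellingly, the authors remark right after this theorem that they themselves tried to find a block-decomposition lower bound for $\gamma(G(\alpha,2,2))$ and were unsuccessful.

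The ``delicate combinatorial step'' you defer is not merely unverified; it cannot be carried out, in either of the forms you propose. For the disjoint version: deleting the $\alpha-1$ internal slice centers $(j,1,1)$, $1\le j\le\alpha-1$, from $G(\alpha,2,2)$ leaves a planar graph (draw the slices as concentric $3\times 3$ grids and octagons with radial matching edges, placing the two end-slice centers in the innermost and outermost faces). Hence every $K_{3,3}$ (or $K_5$) minor must use one of these $\alpha-1$ vertices, so there exist at most $\alpha-1$ pairwise disjoint non-planar minors --- one fewer than you need, for every $\alpha\ge 2$. For the chained $T_\alpha$ version with minors confined to slabs: deleting the center of either bounding slice of a slab $G(1,2,2)$ also leaves a planar graph (the punctured slice is an $8$-cycle drawn concentrically around the other $3\times 3$ grid), so each slab minor must contain both slice centers, which already forces every shared vertex to be a slice center; deleting both copies of any one of the nine positions likewise leaves a planar graph, so each slab minor must use at least one copy of every position; and each slab minor must use both copies of at least three positions, since two induced pieces of $3\times 3$ grids joined by at most two matching edges are planar (any subgraph of the $3\times 3$ grid plus one chord is planar). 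Now fix a position: once some slab's minor uses both copies, the covering requirement propagates rightward and bars every later slab from using both copies of that position unless a shared vertex at that position intervenes. So each of the nine positions can serve as a ``both copies'' position for at most one slab, plus at most one more slab per shared vertex, and counting gives $3\alpha\le 9+(\alpha-1)$, i.e. $\alpha\le 4$. Thus your chain cannot exist once $\alpha\ge 5$, and the plan cannot prove the theorem. To repair the proof, replace the block-decomposition idea with a single global minor of genus $\alpha$, as the paper does with $K_{3,4\alpha}$.
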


\begin{proof}
Figure \ref{fig:Da22-contract-to-K3,4a} shows how to delete and contract edges of $G(\alpha,2,2)$ to obtain a $K_{3,4\alpha}$ minor. By the genus formula for complete bipartite graphs (\cite{Ri1965}, \cite{Bo1978})
we have:
$$\alpha = \left\lceil \alpha - \frac{1}{2}\right\rceil=\left\lceil\frac{(3-2)(4\alpha-2)}{4}\right\rceil = \gamma(K_{3,{4\alpha}}) \leq \gamma(G(\alpha,2,2)).$$

\begin{figure}[ht]
\centering
\includegraphics[width=\textwidth]{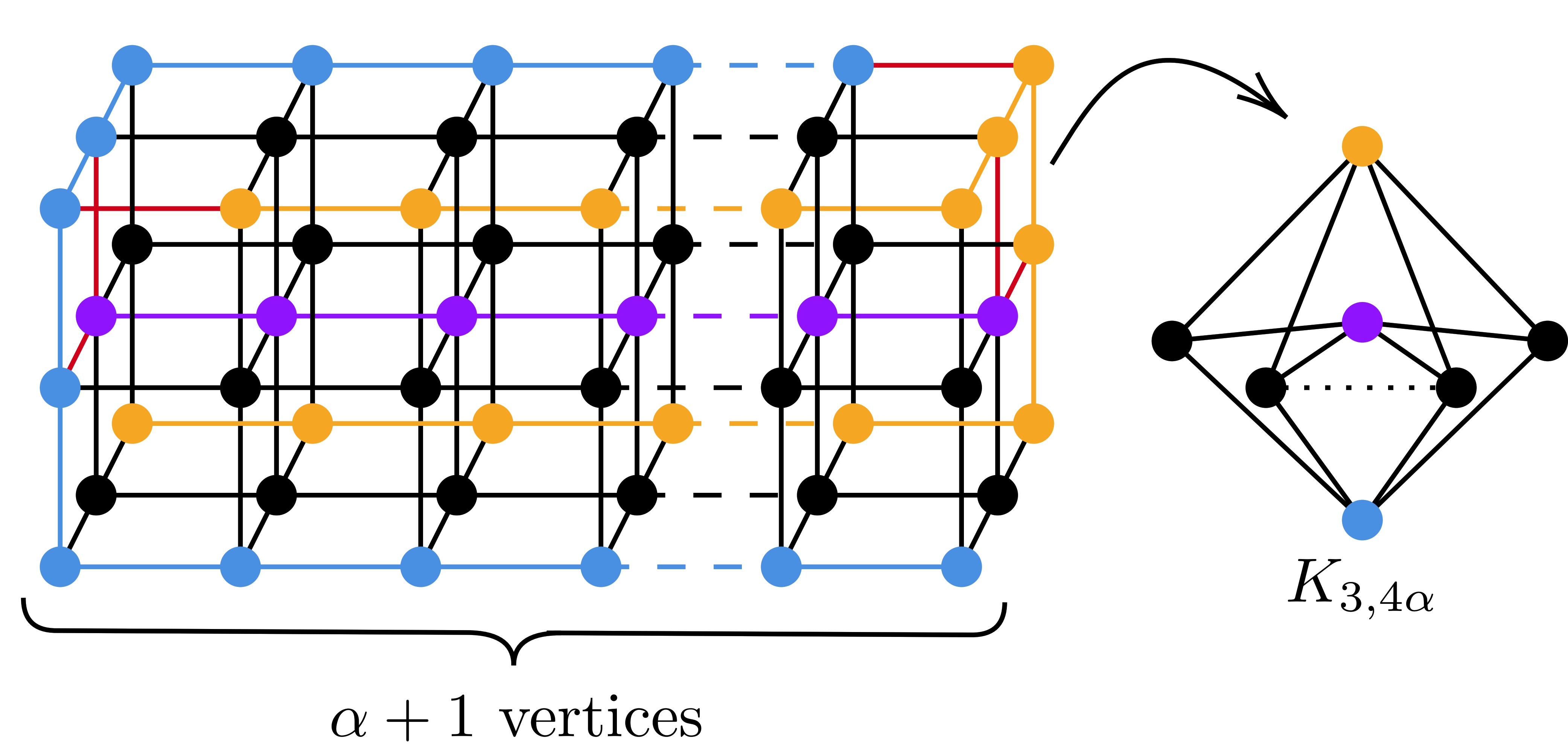}
\caption{Minor of $G(\alpha,2,2)$ to $K_{3,4\alpha}$. The blue, orange, and purple paths are contracted to single vertices. The red edges are deleted.}
\label{fig:Da22-contract-to-K3,4a}
\end{figure}

To obtain the desired upper bound on any $\gamma(G(\al, 2, 2))$, we apply results (2) and (3) from Proposition \ref{prop:3dcase3}, depending on whether $\al$ is even or odd, respectively. First, recall that $G(\al, 1, 1)$ is planar by Lemma \ref{lem:Da11}. Now, if $\al$ is odd, then 
$$ \gamma(G(\al,2,2)) \leq \gamma(G(\al, 1, 1)) + \frac{(\al +1)(2+2)}{4} -1  = \al.$$
If $\al$ is even, then 
$$\gamma(G(\al,2,2)) \leq \gamma(G(\al-1, 1, 1)) + \frac{2\al  + 2\al + 4}{4} -1  = \al.$$

Finally, we note that  $G(\alpha,2,2)$ is not quadrilateral  for any $\al \in \mathbb{N}$ since the lower bound on its genus from Corollary \ref{cor:Eulergirth} gives $\gamma(G(\al, 2, 2)) \geq \frac{3\al -2}{4}$, and $\gamma(G(\al, 2,2)) = \al > \frac{3\al -2}{4}$ for all $\al \in \mathbb{N}$. 
\end{proof}

Consider the sets of grid graphs  $\{G(\al, 2,2)\}$ with $\al \in \mathbb{N}$ and $\{G(\al_1, \al_2, 1)\}$ with some $\al_i$  even for $i=1,2$. All of these $3$-dimensional grid graphs have at least one even grid parameter and a minimal genus $2$-cell embedding for each such graph can be constructed using the procedure described in the proof of Proposition \ref{prop:3dcase3}. This motivates the following conjecture:

\begin{conjecture}
\label{conjecture1}
If $G(\al_1, \al_2, \al_3)$ is a $3$-dimensional grid graph with at least one even parameter, then the $2$-cell embedding for $G(\al_1, \al_2, \al_3)$ constructed in Proposition \ref{prop:3dcase3} is a  minimal embedding, i.e., the upper bounds in Proposition \ref{prop:3dcase3} are in fact equalities.  
\end{conjecture}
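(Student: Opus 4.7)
My plan is to match the upper bounds from Proposition \ref{prop:3dcase3} by establishing sharp lower bounds on $\gamma(G(\al_1, \al_2, \al_3))$, proceeding by cases on the number and position of even grid parameters. The baseline bound $\gamma \geq 1 + |E|/4 - |V|/2$ from Corollary \ref{cor:Eulergirth} is strictly too weak whenever a parameter is even (equivalently, the quadrilateral distance of the graph is strictly positive), so I would strengthen it using the minor- and block-theoretic techniques from Section \ref{subsec:lowerbounds}.

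For each case I would generalize the subgraph-extraction method used in Lemma \ref{lem:genuslowerbound} and Theorem \ref{thm:Da22}. In the case where only $\al_3$ is even, decompose $G(\al_1, \al_2, \al_3)$ as in Proposition \ref{prop:3dcase3} into an ``odd core'' $G_I \cong G(\al_1, \al_2, \al_3 - 1)$ (whose genus is pinned down by Theorem \ref{thm:kdimgenus1}) and a boundary ``slab'' $G_J$ together with the connecting edges $E_B$. Within the two-layer region $\{\beta_3 \in \{\al_3 - 1, \al_3\}\}$, the aim is to exhibit approximately $\frac{(\al_1+1)(\al_2+1)}{4} - 1$ pairwise vertex-disjoint $K_{3,3}$ minors, each contributing $1$ to the genus via Theorem \ref{thm:Blocks}. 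These minors would need to be vertex-disjoint from a separate subgraph witness of $\gamma(G(\al_1, \al_2, \al_3 - 1))$ embedded in the deeper layers $\{\beta_3 \leq \al_3 - 2\}$. The two- and three-even-parameter cases would proceed analogously, with boundary slabs along two or three coordinate directions, each contributing its own family of $K_{3,3}$ minors summed via the disjoint-union rule.

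The main obstacle is achieving the required density of vertex-disjoint $K_{3,3}$ minors while preserving a disjoint witness for the odd-core genus. A single $K_{3,3}$ minor in a grid naturally occupies a $G(1,2,1)$-type subgraph spanning two layers, so packing $\Theta((\al_1+1)(\al_2+1))$ of them into just two layers will generally force some overlap with a minor witness of the odd-core genus lying in an adjacent layer. This overlap must be controlled very carefully, and in the three-even-parameter case the three families of $K_{3,3}$ minors meet along the edges and corners of the grid, further complicating the disjointness requirements. Resolving this will likely require either a clever simultaneous packing of minors across all boundary directions, or a fundamentally different approach such as a direct Euler-characteristic argument that counts face-walks of length $>4$ forced by parity obstructions, thereby sharpening Corollary \ref{cor:Eulergirth} to match the upper bound of Proposition \ref{prop:3dcase3} directly.
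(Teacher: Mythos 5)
The statement you are trying to prove is Conjecture \ref{conjecture1}: the paper itself offers \emph{no proof} of it. The authors state it as an open problem and explicitly record a failed attempt of exactly the kind you describe --- they write that they ``tried to find a block decomposition that would provide a sharp lower bound on $\gamma(G(\al,2,2))$ but were unsuccessful.'' So there is no paper proof to compare against, and your proposal, by your own admission, is a research plan rather than a proof: the packing step that you identify as ``the main obstacle'' is precisely the open content of the conjecture, and it is left unresolved.

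Beyond incompleteness, the specific packing you propose is quantitatively impossible, so this route cannot be repaired as stated. Take case (1), where $\al_1,\al_2$ are odd and $\al_3$ is even. Your two-layer region $\{\beta_3 \in \{\al_3-1,\al_3\}\}$ induces a subgraph isomorphic to $G(\al_1,\al_2,1)$, and Theorem \ref{thm:genus} gives $\gamma(G(\al_1,\al_2,1)) = \floor*{\frac{\al_1}{2}}\floor*{\frac{\al_2}{2}} = \frac{(\al_1-1)(\al_2-1)}{4}$. Since $m$ pairwise vertex-disjoint $K_{3,3}$ minors in a graph force its genus to be at least $m$ (Theorem \ref{thm:Blocks} together with minor monotonicity of genus), the slab can contain at most $\frac{(\al_1-1)(\al_2-1)}{4}$ such minors, which is strictly less than the $\frac{(\al_1+1)(\al_2+1)}{4}-1$ you need whenever $\al_1+\al_2>2$; for $\al_1=\al_2=5$ you need $8$ but at most $4$ can fit. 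The accounting on the other side fails too: confining the odd-core witness to $\{\beta_3 \leq \al_3-2\}$ caps that contribution at $\gamma(G(\al_1,\al_2,\al_3-2))$, and a short computation with the quadrilateral formula of Theorem \ref{thm:kdimgenus1} shows the combined lower bound then falls short of the conjectured value by roughly $\frac{\al_1\al_2}{4}$, i.e., one layer's worth of genus. The underlying difficulty is that genus contributions from adjacent layers cannot be separated into vertex-disjoint witnesses --- the witnesses must interleave, which is exactly why the paper's authors could not push the block/minor technique of Section \ref{subsec:lowerbounds} beyond the families $G(\al_1,\al_2,1)$ and $G(\al,2,2)$, and why the conjecture remains open.
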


Perhaps a generalization of the  arguments used in the proofs of Theorem \ref{thm:genus} and Theorem \ref{thm:Da22} could be modified to prove this conjecture. However, this was not immediately obvious  since different arguments were used to determine sharp lower bounds on  $\gamma(G(\al_1, \al_2, 1))$ and $\gamma(G(\al, 2,2))$. We tried to find a block decomposition that would provide a sharp lower bound on  $\gamma(G(\al,2,2))$ but were unsuccessful.


\section{The Maximum Genus of a Grid Graph}
\label{sec:GenusRange}
The \textbf{maximum genus} $\gamma_{M}(G)$  of a  graph $G$ is the largest possible $g$ for which $G$ $2$-cell embeds on $S_g$.  Recall that the \textbf{Betti number} of a graph $G$ is $\beta(G) = |E| - |V| + 1$. If $G$ $2$-cell embeds on $S_g$, we have that $2-2g = \chi(S_g) = |V| - |E| + |F|.$  Any such embedding must have at least one face, and so, we always have the upper bound $\gamma_{M}(G) \leq  \lfloor{\frac{\beta(G)}{2}} \rfloor$. A graph $G$ is called \textbf{upper-embeddable} if $\gamma_{M}(G) =  \lfloor{\frac{\beta(G)}{2}} \rfloor$.  Here, we  summarize some facts about the maximum genus of a grid graph. 

\begin{theorem}
\label{thm:UEmaintheorem}
Every grid graph is upper-embeddable. 
\end{theorem}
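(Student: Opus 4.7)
The plan is to invoke Xuong's well known characterization of the maximum genus: a graph $G$ is upper-embeddable if and only if it admits a spanning tree $T$ such that the cotree $G\setminus E(T)$ has at most one connected component with an odd number of edges (equivalently, the Xuong co-rank $\xi(G)\le 1$, which automatically has the same parity as $\beta(G)$). Thus, for every grid graph $G(\alpha_1,\ldots,\alpha_k)$, it suffices to exhibit a spanning tree whose cotree has at most one odd-edge component.

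My approach would be to construct such a spanning tree explicitly by induction on the dimension $k$, exploiting the product structure $G(\alpha_1,\ldots,\alpha_k)=G(\alpha_1,\ldots,\alpha_{k-1})\,\Box\, P_{\alpha_k}$. The base case $k=1$ is immediate, since $P_{\alpha_1}$ is itself a tree and $\gamma_M(P_{\alpha_1})=0=\lfloor \beta/2\rfloor$. For $k\ge 2$ I would first handle the two-dimensional slice $P_{\alpha_1}\Box P_{\alpha_2}$. Start with the natural ``comb'' spanning tree $T_0$ consisting of the horizontal path at $y=0$ together with every vertical fibre $\{x\}\times P_{\alpha_2}$. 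The cotree $G\setminus E(T_0)$ is then a disjoint union of the $\alpha_2$ horizontal paths at levels $y=1,\ldots,\alpha_2$, each with $\alpha_1$ edges. If $\alpha_1$ is even, every cotree component is already even and Xuong's condition is satisfied. If $\alpha_1$ is odd, I would perform local swaps along fundamental cycles, pairing adjacent rows: for each pair $(2j-1,2j)$, delete the tree edge $(0,2j-1)(0,2j)$ and insert the cotree edge $(0,2j)(1,2j)$ into the tree. Each such swap fuses the two odd rows into a single cotree component of $\alpha_1+1$ edges (even), while the leftover truncated row has $\alpha_1-1$ edges (also even). This eliminates all but at most one odd component, matching the parity of $\beta$.

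For the inductive step to $k\ge 3$, I would lift a good spanning tree $T'$ of $G(\alpha_1,\ldots,\alpha_{k-1})$ to $G$ by placing $T'$ at level $0$ in the last coordinate and adding every fibre $\{v\}\times P_{\alpha_k}$ for $v\in V(G(\alpha_1,\ldots,\alpha_{k-1}))$; this is a spanning tree of $G$. The cotree then decomposes into: (i) the cotree of $T'$ sitting at level $0$, and (ii) a stack of $\alpha_k$ copies of $E(G(\alpha_1,\ldots,\alpha_{k-1}))$ (horizontal edges at levels $1,\ldots,\alpha_k$). I would then apply the same pairing-swap trick, this time across adjacent levels in the new coordinate direction, to merge any surplus odd components in the ``stacked'' part of the cotree, and finally combine these with the at-most-one odd component inherited from $T'$ using one more fundamental-cycle swap if needed.

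The main obstacle I anticipate is bookkeeping: ensuring that an edge swap introduced to merge two odd cotree components in one coordinate direction does not simultaneously split some even component into odd pieces in another direction. To keep this manageable, I would carry out all swaps at the boundary layer ($x_i=0$) and verify by direct inspection that each swap changes the parity of exactly two cotree components. A faster but less self-contained alternative would be to cite the known fact that the Cartesian product of two connected graphs on at least two vertices is upper-embeddable and conclude by a one-line induction once the base $P_{\alpha_1}\Box P_{\alpha_2}$ is verified; I expect the paper to adopt whichever of these two routes gives the tidiest exposition.
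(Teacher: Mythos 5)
Your proposal is correct, and your primary route is genuinely different from the paper's. The paper's proof is essentially a citation: after observing that a $1$-dimensional grid graph is a path (so trivially upper-embeddable), it invokes Corollary 3.3 of Mohar--Pisanski--\v{S}koviera, which states that the Cartesian product of any two loopless connected non-trivial graphs is upper-embeddable; since every grid graph of dimension $k\geq 2$ is the Cartesian product of two lower-dimensional grid graphs (which are loopless, connected, and non-trivial), the theorem follows at once---no induction and no separate verification of $P_{\alpha_1}\boxempty P_{\alpha_2}$ is needed. This is precisely the ``faster but less self-contained alternative'' of your final sentence. Your main route---building explicit Xuong trees by induction---is sound and buys something the paper's does not: it is self-contained modulo Xuong's characterization and produces concrete spanning trees (hence concrete maximum-genus embeddings), whereas the paper outsources all combinatorial work to the product theorem, whose proof in turn rests on the same Xuong machinery. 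Three small points to tighten if you write yours up in full: (i) in dimension $2$ your swap does not actually fuse the two odd rows into a single component; it yields two components, of sizes $\alpha_1+1$ and $\alpha_1-1$ (exactly the counts you state), both even, which is all you need; (ii) in the inductive step, when a cotree copy of $G(\alpha_1,\ldots,\alpha_{k-1})$ loses an edge you must know it stays connected, since otherwise it could split into two odd pieces---this holds because every edge of a grid graph of dimension at least $2$ lies on a $4$-cycle, so there are no bridges; (iii) the final swap, merging the at-most-one odd component inherited at level $0$ with an unpaired odd level, should be spelled out (cut the fibre edge between levels $0$ and $1$ at a vertex $w$ lying in the odd level-$0$ component and insert a level-$1$ cotree edge incident to $w$; the merged component then has even edge count), and one should note that all the swaps use pairwise disjoint edges so they do not interfere with one another.
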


\begin{proof}
First,  any $1$-dimensional grid graph $G(\al)$ is upper-embeddable since such a graph is a path with $\al + 1$ vertices and $\al$ edges, and we obviously have that $\gamma_{M}(G(\al)) = 0$. 

Now, Corollary 3.3 in Mohar--Pisanski--\u Skoviera \cite{MoPiSk1988} states that the Cartesian product of any two loopless connected non-trivial graphs is upper-embeddable.  Note that any $k$-dimensional grid graph  with $k \geq 2$ can be expressed as the Cartesian product of two subgraphs that are each (lower-dimensional) grid graphs. At the same time, any grid graph is a loopless connected non-trivial graph. Thus, by \cite[Corollary 3.3]{MoPiSk1988}, any grid graph is upper-embeddable.
\end{proof}

The following corollary  characterizes the maximum genus of a grid graph in terms of its grid parameters.

\begin{corollary}
\label{thm:MaxGenusGrids}
The maximum genus of a $k$-dimensional grid graph $G(\al_1, \ldots, \al_k)$ is exactly
$$ \gamma_{M}(G(\al_1, \ldots, \al_k)) = \Bigg\lfloor{ \frac{1}{2}  \left(\prod_{i=1}^{k} (\alpha_{i} +1)\right) \Big( \left(\sum_{i=1}^k \frac{\al_i}{\al_i+1} \right)-1 \Big) +  \frac{1}{2}  } \Bigg\rfloor $$
\end{corollary}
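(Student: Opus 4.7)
The plan is to derive the formula as a direct consequence of Theorem \ref{thm:UEmaintheorem} together with the vertex and edge counts provided in Lemma \ref{prop:verticesforgrids}. Since Theorem \ref{thm:UEmaintheorem} tells us that every grid graph $G(\al_1,\ldots,\al_k)$ is upper-embeddable, we immediately have
\[
\gamma_M(G(\al_1,\ldots,\al_k)) = \left\lfloor \frac{\beta(G(\al_1,\ldots,\al_k))}{2} \right\rfloor,
\]
so the entire task reduces to rewriting the Betti number $\beta(G) = |E(G)| - |V(G)| + 1$ in the stated form.

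First I would substitute the two formulas from Lemma \ref{prop:verticesforgrids}, giving
\[
\beta(G(\al_1,\ldots,\al_k)) = \left(\prod_{i=1}^{k}(\al_i+1)\right)\left(\sum_{i=1}^{k} \frac{\al_i}{\al_i+1}\right) - \prod_{i=1}^{k}(\al_i+1) + 1.
\]
Then I would factor out $\prod_{i=1}^{k}(\al_i+1)$ from the first two terms to obtain
\[
\beta(G(\al_1,\ldots,\al_k)) = \left(\prod_{i=1}^{k}(\al_i+1)\right)\left(\sum_{i=1}^{k} \frac{\al_i}{\al_i+1} - 1\right) + 1.
\]

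Finally, I would divide by $2$ and apply the floor function, using the elementary identity $\lfloor x/2 + 1/2 \rfloor = \lfloor (x+1)/2 \rfloor$ for real $x$, to conclude
\[
\gamma_M(G(\al_1,\ldots,\al_k)) = \left\lfloor \frac{1}{2}\prod_{i=1}^{k}(\al_i+1)\left(\sum_{i=1}^{k} \frac{\al_i}{\al_i+1} - 1\right) + \frac{1}{2} \right\rfloor,
\]
which is the desired expression. There is essentially no obstacle here: the argument is a single application of Theorem \ref{thm:UEmaintheorem} followed by algebraic bookkeeping with the counting formulas, and the only mild point to verify is the shift by $1/2$ inside the floor that accounts for the ``$+1$'' in the definition of the Betti number.
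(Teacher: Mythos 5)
Your proposal is correct and follows exactly the paper's own route: invoke Theorem \ref{thm:UEmaintheorem} to get $\gamma_M = \lfloor \beta/2 \rfloor$, then substitute the counts from Lemma \ref{prop:verticesforgrids}; you have merely written out the algebra that the paper leaves implicit (and your final ``identity'' $\lfloor x/2 + 1/2\rfloor = \lfloor (x+1)/2\rfloor$ is not even a floor manipulation, since $x/2+1/2 = (x+1)/2$ as real numbers).
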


\begin{proof}
 $G(\al_1, \ldots, \ldots \al_k)$ is upper-embeddable by Theorem \ref{thm:UEmaintheorem}, which implies that $$\gamma_{M}(G(\al_1, \ldots, \al_k)) = \Big\lfloor{\frac{\beta(G(\al_1, \ldots, \al_k))}{2}} \Big\rfloor.$$ We get the desired formula by applying Lemma  \ref{prop:verticesforgrids} to obtain formulas for $|E|$ and $|V|$, respectively,   in terms of the grid parameters $\al_1, \ldots, \al_k$.
\end{proof}

The \textbf{genus range} of a graph $G$ is the set $R(G) = \{g \in \mathbb{N} :  \gamma(G) \leq g \leq  \gamma_{M}(G) \}$. The interpolation theorem of Duke \cite{Du1966} shows that $G$ will $2$-cell embed on every surface $S_g$, where $g \in R(G)$. One could combine Corollary \ref{thm:MaxGenusGrids} with  results from Section \ref{sec:BlockandGenus} to obtain explicit formulas for the genus range of many grid graphs in terms their respective grid parameters. To give an explicit example that will also be used in the proof of Theorem \ref{thm:gridtorus}, we consider the genus range of any $k$-dimensional $G(1, \ldots, 1)$. Such a grid graph is isomorphic to the well studied $k$-dimensional hypercube, $Q_k$. An exact genus formula for $Q_k$ was first proved by Ringel in \cite{Ri1955} and Beineke and Harary in \cite{BeHa1965}. More recently, Shouman \cite{Sh2019} has provided a proof using the real moment-angle complex. In addition, a maximum genus formula for $Q_k$ was  given by Zaks in \cite{Za1974}. A genus formula for $Q_k$ can be derived as a special case of Theorem \ref{thm:kdimgenus1}, while Corollary \ref{thm:MaxGenusGrids}  supplies a maximum genus formula, both of which are highlighted in the following corollary.

\begin{corollary}
\label{cor:hypercuberange}
For any $k \geq 2$, the genus range of the $k$-dimensional grid graph $G(1,\ldots, 1) \cong Q_k$ is exactly $$\{ g \in \mathbb{N} :  1+(k-4)2^{k-3} \leq g \leq (k-2)2^{k-2} \}.$$ 
\end{corollary}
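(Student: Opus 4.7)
The plan is to reduce the problem to computing $\gamma(Q_k)$ and $\gamma_M(Q_k)$ and then perform the arithmetic. Specifically, the Duke interpolation theorem \cite{Du1966}, already invoked in the paragraph preceding the corollary, asserts that a graph $G$ $2$-cell embeds on $S_g$ for every integer $g$ in the interval $[\gamma(G),\gamma_M(G)]$. Consequently, it suffices to verify that for all $k\geq 2$ we have $\gamma(Q_k) = 1+(k-4)2^{k-3}$ and $\gamma_M(Q_k) = (k-2)2^{k-2}$.

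For the lower endpoint, I would first record the trivial case $k=2$: here $Q_2 \cong G(1,1)$ is a single $4$-cycle, which is planar, and the formula $1+(k-4)2^{k-3}$ indeed evaluates to $0$. For $k\geq 3$, every grid parameter of $G(1,\ldots,1)$ equals $1$ and is therefore odd, so Theorem \ref{thm:kdimgenus1} applies and gives
\[
\gamma(Q_k) \;=\; 1 + \frac{|E(Q_k)|}{4} - \frac{|V(Q_k)|}{2}.
\]
Lemma \ref{prop:verticesforgrids} (with each $\alpha_i=1$) yields $|V(Q_k)| = 2^k$ and $|E(Q_k)| = k\cdot 2^{k-1}$, and a short computation collapses the right side to $1+(k-4)2^{k-3}$.

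For the upper endpoint, I would plug $\alpha_1=\cdots=\alpha_k=1$ into the formula of Corollary \ref{thm:MaxGenusGrids}. The inner expression becomes $\tfrac{1}{2}\cdot 2^k\cdot (k/2 - 1) + \tfrac{1}{2} = (k-2)2^{k-2} + \tfrac{1}{2}$, and since $(k-2)2^{k-2}$ is an integer for every $k\geq 2$, the floor simply strips the $\tfrac{1}{2}$ off and returns $(k-2)2^{k-2}$, as required.

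The only point that requires any care is the boundary case $k=2$, where the hypothesis $k\geq 3$ of Theorem \ref{thm:kdimgenus1} fails and one must argue planarity of $Q_2$ by hand; this is immediate. Otherwise the proof is a direct chain of three cited results (Duke, Theorem \ref{thm:kdimgenus1}, and Corollary \ref{thm:MaxGenusGrids}) followed by elementary algebraic simplification, so I do not anticipate any genuine obstacle.
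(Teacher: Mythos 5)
Your proof is correct and takes essentially the same approach as the paper, which likewise obtains the lower endpoint as a special case of Theorem \ref{thm:kdimgenus1}, the upper endpoint from Corollary \ref{thm:MaxGenusGrids}, and the full range via Duke's interpolation theorem. Your explicit handling of the boundary case $k=2$ (where the hypothesis of Theorem \ref{thm:kdimgenus1} fails and planarity of the $4$-cycle must be checked directly) is a detail the paper glosses over, and your arithmetic at both endpoints checks out.
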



\section{Planar and Toroidal Grid Graphs}
\label{sec:planarandtoroidal}

In this section, our main goal is to classify which grid graphs $2$-cell embed on the torus. First, we classify which grid graphs are planar.

\begin{proposition}
A grid graph $G(\al_1, \ldots, \al_k)$ is planar if and only if
\begin{enumerate}
\item $k \leq 2$ or
\item $k =3$ and at most one of $\al_1$, $\al_2$, $\al_3$ is greater than $1$.
\end{enumerate}
\label{prop:PlanarGrid}
\end{proposition}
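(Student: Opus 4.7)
The plan is to prove both directions by reducing to results already established in the paper, with the only extra ingredient being the standard fact that $\g(H)\leq \g(G)$ whenever $H$ is a subgraph of $G$.

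For the forward direction I would handle each case separately. When $k=1$, $G(\al_1)=P_{\al_1}$ is a path and therefore planar. When $k=2$, $G(\al_1,\al_2)$ embeds in $\R^{2}$ by placing each vertex at its coordinate pair and drawing each edge as a unit line segment. When $k=3$ and at most one grid parameter exceeds $1$, symmetry of the Cartesian product lets me reindex so that $\al_2=\al_3=1$, and then Lemma \ref{lem:Da11} does the job.

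For the reverse direction I would argue the contrapositive: if neither listed condition holds, then either $k=3$ with at least two parameters at least $2$, or else $k\geq 4$. The key observation is that $P_{\beta}$ is a subgraph of $P_{\al}$ whenever $\beta\leq \al$, and hence (by fixing the additional coordinates at a single value) $G(\beta_1,\ldots,\beta_m)$ is a subgraph of $G(\al_1,\ldots,\al_k)$ whenever $m\leq k$ and $\beta_i\leq \al_i$ for each $i\leq m$. By subgraph monotonicity of genus, it then suffices to exhibit one non-planar subgraph in each case. For $k=3$, after reindexing so that $\al_1,\al_2\geq 2$, the subgraph $G(2,2,1)$ appears, and Theorem \ref{thm:genus} gives $\g(G(2,2,1))=1$. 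For $k\geq 4$, the hypercube $Q_4\cong G(1,1,1,1)$ appears as a subgraph, and Corollary \ref{cor:hypercuberange} applied with $k=4$ yields $\g(Q_4)=1+(4-4)\cdot 2^{4-3}=1$. In either case $\g(G(\al_1,\ldots,\al_k))\geq 1$, so $G(\al_1,\ldots,\al_k)$ is not planar.

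I do not anticipate any real obstacle: once subgraph monotonicity is invoked, the proof amounts to bookkeeping that leverages Lemma \ref{lem:Da11}, Theorem \ref{thm:genus}, and Corollary \ref{cor:hypercuberange}. The only minor care needed is verifying that the single example $G(2,2,1)$ covers all three-parameter configurations in which two parameters are at least $2$ (irrespective of the third parameter's size or parity), and that $G(1,1,1,1)$ sits inside every grid graph of dimension at least $4$.
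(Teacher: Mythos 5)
Your proposal is correct and follows essentially the same route as the paper's proof: planarity for $k \leq 2$ and for $G(\al,1,1)$ via Lemma \ref{lem:Da11}, and non-planarity via the subgraphs $G(2,2,1)$ (using Theorem \ref{thm:genus}) and $G(1,1,1,1)$ (using Corollary \ref{cor:hypercuberange}) together with subgraph monotonicity of genus. The only difference is that you spell out the case bookkeeping and the monotonicity argument more explicitly than the paper does.
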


\begin{proof}
The fact that $k$-dimensional grid graphs are planar for $k =1,2$ is obvious. For the second case, any such grid graph is isomorphic to $G(\al,1,1)$  for some $\al \in \mathbb{N}$. Such grids are planar by  Lemma \ref{lem:Da11}. 

To show that no other grid graphs are planar, we consider two cases. Any $3$-dimensional grid graph with at least two of $\al_1$, $\al_2$, and $\al_3$ greater than 1 contains a subgraph isomorphic to $G(2, 2, 1)$. However, Theorem \ref{thm:genus} shows that $\gamma(G(2,2,1)) = 1$, and so, any graph containing $G(2,2,1)$ as a subgraph is non-planar.  Now consider any $k$-dimensional grid graph where $k \geq 4$. Any such grid graph contains a subgraph isomorphic to $G(1,1,1,1)$. Using Corollary \ref{cor:hypercuberange}, we see that $G(1,1,1,1)$ is non-planar, and so, any graph containing $G(1,1,1,1)$ as a subgraph is non-planar, as needed. 
\end{proof}

\begin{theorem}
\label{thm:gridtorus}
A grid graph $G(\alpha_1, \ldots, \alpha_k)$ $2$-cell embeds on the torus if and only if
\begin{enumerate}
\item $k =2$ and $\alpha_{1} + \alpha_{2} \geq 3$,
\item $k=3$ and at most one of $\alpha_{1}$, $\alpha_{2}$, $\alpha_{3}$ is greater than 1,
\item $k=3$ and  up to permutation, $(\alpha_{1}, \alpha_{2}, \alpha_{3}) \in \{ (2,2,1), (3,2,1), (3,3,1) \} $,   or
\item $k=4$ and $(\alpha_{1}, \alpha_{2}, \alpha_{3}, \alpha_{4}) = (1,1,1,1)$.
\end{enumerate}
\end{theorem}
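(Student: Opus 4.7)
The plan is to characterize $2$-cell embeddings on $S_1$ via Duke's interpolation theorem \cite{Du1966}, already referenced in Section \ref{sec:GenusRange}: a graph $G$ $2$-cell embeds on the torus precisely when $\gamma(G) \leq 1 \leq \gamma_M(G)$. Because Theorem \ref{thm:UEmaintheorem} established upper-embeddability for every grid graph, the second inequality reduces to the Betti-number condition $\beta(G(\ast)) \geq 2$, which is explicitly computable from the grid parameters via Lemma \ref{prop:verticesforgrids}. I would then split the proof into a sufficiency direction (each of the four listed families satisfies both bounds) and a necessity direction (every other grid graph violates at least one of them).

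Sufficiency is a direct bookkeeping exercise using the prior results. In Case (1), Proposition \ref{prop:PlanarGrid} gives $\gamma = 0$, and a short computation yields $\beta(G(\alpha_1, \alpha_2)) = \alpha_1 \alpha_2$, which is at least $2$ exactly when $\alpha_1 + \alpha_2 \geq 3$. In Case (2), Lemma \ref{lem:Da11} gives $\gamma = 0$ and $\beta(G(\alpha,1,1)) = 4\alpha + 1 \geq 5$ for $\alpha \geq 1$. Case (3) is immediate from Theorem \ref{thm:genus}, which gives genus exactly $1$ for each of the three listed graphs. Case (4) follows from Corollary \ref{cor:hypercuberange}, which yields the range $R(Q_4) = \{1, \ldots, 8\}$.

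For necessity, I would exhibit, for each excluded grid graph, either a subgraph forcing $\gamma \geq 2$ or a Betti-number obstruction, invoking the monotonicity of genus under subgraph inclusion and the fact that $G(\alpha_1, \ldots, \alpha_k)$ contains $G(\beta_1, \ldots, \beta_k)$ whenever $\beta_i \leq \alpha_i$. For $k = 1$ the graph is a path with $\gamma_M = 0$; for $k = 2$ with $\alpha_1 = \alpha_2 = 1$ the graph is $C_4$ with $\beta = 1$. For $k = 3$ with exactly two parameters at least $2$ and not in Case (3), Theorem \ref{thm:genus} gives $\gamma = \lfloor \alpha_1/2 \rfloor \lfloor \alpha_2/2 \rfloor \geq 2$. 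For $k = 3$ with all three parameters at least $2$, the subgraph $G(2,2,2)$ has genus $2$ by Theorem \ref{thm:Da22}. For $k = 4$ with some $\alpha_i \geq 2$, the subgraph $G(2,1,1,1)$ satisfies $|V| = 24$ and $|E| = 52$, so Corollary \ref{cor:Eulergirth} gives $\gamma \geq 2$. For $k \geq 5$, the hypercube $Q_5$ is a subgraph and has genus $5$ by Corollary \ref{cor:hypercuberange}. The main obstacle here is organizational rather than mathematical: making the case split exhaustive and cleanly separating the four allowed families from the surrounding grids. Once this is laid out, every individual configuration is dispatched by a single prior result together with a small Euler-characteristic or Betti-number computation.
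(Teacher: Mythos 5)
Your proposal is correct and follows essentially the same approach as the paper: both rest on the genus-range/interpolation framework (Duke's theorem plus upper-embeddability of grid graphs), the same prior results (Lemma \ref{lem:Da11}, Theorem \ref{thm:genus}, Theorem \ref{thm:Da22}, Corollary \ref{thm:MaxGenusGrids}, Corollary \ref{cor:hypercuberange}, Corollary \ref{cor:Eulergirth}), and the same exhaustive case analysis by dimension. The only differences are cosmetic --- for instance, you phrase the maximum-genus condition directly via Betti numbers, handle $k \geq 5$ via $\gamma(Q_5) = 5$ from Corollary \ref{cor:hypercuberange} where the paper uses the Euler-characteristic bound, and quote the exact genus formula for $G(\al_1, \al_2, 1)$ where the paper invokes $\gamma(G(4,2,1)) = 2$ together with subgraph monotonicity.
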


\begin{proof}

We break this proof down into a series of cases, based on the dimension $k$. 

For $k=1$, we have that $G(\alpha_1)$ is a path, which does not $2$-cell embed on the torus.

For $k=2$,  we can apply Corollary \ref{thm:MaxGenusGrids} to get that the genus range of $G(\al_1, \al_2)$ is precisely $\{ g \in \mathbb{N} :  0 \leq g \leq \lfloor{\frac{1}{2}\al_{1}\al_{2} } \rfloor \}.$ Thus, $G(\al_1, \al_2)$ embeds on the torus if and only if $\al_1 + \al_2 \geq 3$, as needed. 

For $k=3$, multiple subcases are necessary. If at most one of $\alpha_{1}$, $\alpha_{2}$, $\alpha_{3}$ is greater than $1$, then  such a grid graph is graph isomorphic to $G(\al, 1, 1)$ for some $\al \in \mathbb{N}$, which is planar by Lemma \ref{lem:Da11}. To determine that such a graph $2$-cell embeds on the torus, apply Corollary \ref{thm:MaxGenusGrids} to establish that $\gamma_{M}(G(\al, 1,1)) = \lfloor{ \frac{4\al+1}{2} \rfloor}$. Thus, the genus range for such graphs always include $g=1$, as needed. Next, the grid graphs $G(2,2,1)$,  $G(3,2,1)$, and $G(3,3,1)$ all $2$-cell embed on the torus by Theorem \ref{thm:genus}.  Now, we will justify that any other $3$-dimensional grid graph does not $2$-cell embed on the torus. By Theorem \ref{thm:Da22},  $\gamma(G(2, 2, 2)) =2$. Then if all $\al_i \geq 2$, we have that  $\gamma(G(\al_1, \al_2, \al_3)) \geq \gamma(G(2,2,2)) \geq 2$, and so, any such $G(\al_1, \al_2, \al_3)$ does not $2$-cell embed on the torus.  Thus, we now only need to consider $G(\alpha_{1}, \alpha_{2}, \alpha_{3})$ where exactly one such $\alpha_{i} =1$.  Combined with the fact we just showed $G(2,2,1)$, $G(3,2,1)$, and $G(3,3,1)$ are all toroidal,  it only remains to consider $G(\alpha_{1}, \alpha_{2}, 1)$ with $\alpha_{1} \geq 4$ and $\alpha_{2} \geq 2$. By Theorem \ref{thm:genus}, we have that $\gamma(G(4,2,1)) =2$, and so, $G(4,2,1)$ does not $2$-cell embed in the torus. Thus, the same conclusion holds for any such $G(\alpha_{1}, \alpha_{2}, 1)$ with $\alpha_{1} \geq 4$ and $\alpha_{2} \geq 2$.

For $k=4$,  $G(1,1,1,1)$ is isomorphic to the four dimensional hypercube $Q_4$, which $2$-cell embeds on the torus by Corollary \ref{cor:hypercuberange}.  Any other grid graph in this case contains a subgraph isomorophic to $G(2,1,1,1)$, which does not $2$-cell embed on the torus by Corollary \ref{cor:Eulergirth}. Thus, the same conclusion holds for any $4$-dimensional grid graph left in this case. 

For $k \geq 5$, every such grid graph contains a subgraph isomorphic to $G(1,1,1,1,1)$. However, Corollary \ref{cor:Eulergirth} implies that $G(1,1,1,1,1)$ does not $2$-cell embed in the torus, and so, the same holds for all other grid graphs of dimension $k \geq 5$. \end{proof}

We note that there are only trivial examples of grid graphs that embed on the torus but do not $2$-cell embed on the torus, specifically, any $G(\al_1, \ldots, \al_k)$ with $k \leq 2$ will meet these qualifications. 

Using the tools we developed in Section \ref{sec:BlockandGenus} along with White's work \cite{Wh1970}, one could possibly continue to classify which grid graphs minimally embed  on a fixed surface of genus $g \in \mathbb{N}$. In the remainder of this section, we just show  that the set of grid graphs that minimally embed on a fixed surface $S_g$ is always finite for $g > 0$.

Define $\mathscr{C}_g$ as the set of grid graphs that embed on $S_g$ but not on $S_{g-1}$. Proposition \ref{prop:PlanarGrid} gives a complete description of $\mathscr{C}_{0}$ and one could easily use  Theorem \ref{thm:gridtorus}  to give a complete description of $\mathscr{C}_{1}$. The following theorem and its proof provide some evidence for how $\mathscr{C}_{g}$ behaves more generally. 

\begin{theorem}
The set $\mathscr{C}_g$ is finite if and only if $g\neq 0$. Equivalently, the collection of grid graphs that minimally embed on any closed orientable surface with genus $g$  is infinite, if and only if $g=0$.
\end{theorem}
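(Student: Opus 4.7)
For the case $g = 0$: the set $\mathscr{C}_0$ is exactly the collection of planar grid graphs, and Proposition~\ref{prop:PlanarGrid} shows this set is infinite (it contains, for instance, the infinite family $\{G(\al,1)\}_{\al \in \mathbb{N}}$, which are pairwise non-isomorphic since they have different numbers of vertices).

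For the case $g \geq 1$, the strategy is to bound both the dimension $k$ and the maximum grid parameter of any $G(\al_1,\ldots,\al_k) \in \mathscr{C}_g$ by constants depending only on $g$, which immediately yields finiteness. Since $G(\al_1,\ldots,\al_k)$ contains $G(1,\ldots,1) \cong Q_k$ as a subgraph and genus is monotone under subgraph containment, Corollary~\ref{cor:hypercuberange} gives $\gamma(G) \geq \max\{0,\, 1+(k-4)2^{k-3}\}$, and the right-hand side tends to infinity with $k$. Hence $\gamma(G) = g$ forces $k \leq K(g)$ for some explicit constant $K(g)$.

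With $k$ bounded, I would next bound $\max_i \al_i$ for each fixed admissible $k$ by a case split. The cases $k \leq 2$ are vacuous since every $1$- or $2$-dimensional grid is planar. When $k \geq 3$ and at least two of the $\al_i$ are $\geq 2$, after permutation one may assume $\al_1 = \max_i \al_i$, $\al_2 \geq 2$, and $\al_3 \geq 1$, so that $G \supseteq G(\al_1,2,1)$; Theorem~\ref{thm:genus} then gives $g = \gamma(G) \geq \lfloor \al_1/2 \rfloor$, and hence $\max_i \al_i \leq 2g+1$. In the remaining subcase (at most one $\al_i$ exceeds $1$) the grid is isomorphic to $G(\al,1,\ldots,1)$ for some $\al \in \mathbb{N}$; for $k=3$ this graph is planar by Lemma~\ref{lem:Da11} and contributes nothing to $\mathscr{C}_g$, while for $k \geq 4$ the $k-1$ parameters equal to $1$ are odd, so Theorem~\ref{thm:kdimgenus1} applies and, after substituting the counts from Lemma~\ref{prop:verticesforgrids}, gives $\gamma(G(\al,1,\ldots,1)) = 1 + 2^{k-4}[(k-3)\al + (k-5)]$, which is linear in $\al$ with positive leading coefficient whenever $k \geq 4$; fixing $\gamma(G)=g$ then bounds $\al$.

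Combining the bound on $k$ with the bound on $\max_i \al_i$ for each $k \leq K(g)$ produces only finitely many possible tuples $(\al_1,\ldots,\al_k)$, and hence only finitely many grid graphs in $\mathscr{C}_g$. I do not expect any single step to be a serious obstacle: subgraph monotonicity of genus is standard, and the remaining content is the bookkeeping of the case split together with the routine Euler-formula arithmetic for the degenerate grids $G(\al,1,\ldots,1)$. The only point requiring real care is to make sure no tuple slips through the case split, in particular to treat the subcase ``exactly one $\al_i \geq 2$'' separately in both its planar $k=3$ flavor and its higher-dimensional $k \geq 4$ flavor.
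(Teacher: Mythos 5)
Your proposal is correct and follows essentially the same route as the paper: bound the dimension via the hypercube subgraph $Q_k$ (Corollary \ref{cor:hypercuberange}), then bound the parameters via the genus of $G(\al_1,2,1)$ (the paper uses Lemma \ref{lem:genuslowerbound}, you use Theorem \ref{thm:genus}, which rests on it) and via the growth of $\gamma(G(\al,1,\ldots,1))$ for $k\geq 4$ (the paper uses Corollary \ref{cor:Eulergirth}, you use Theorem \ref{thm:kdimgenus1}; both yield the formula $1+2^{k-4}[(k-3)\al+(k-5)]$, and your arithmetic here is in fact cleaner than the paper's). The only difference is presentational: you give direct effective bounds on $k$ and $\max_i \al_i$, where the paper argues by contradiction with subsequences of an assumed infinite family.
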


\begin{proof}
Since any graph contained in $\mathscr{C}_g$ corresponds with a minimal embedding, we don't distinguish between embeddings and $2$-cell embeddings in this proof.  We can easily see that $\mathscr{C}_0$ is infinite since for $k=1,2$,  every $k$-dimensional grid graph embeds in the plane.

Now, suppose there exists $g_0 \in \mathbb{N}$ such that $| \mathscr{C}_{g_{0}} | =  \infty$, say $\mathscr{C}_{g_{0}} = \{ G_i \}_{i=1}^{\infty}$, where each $G_i$ is a grid graph with genus  $\gamma(G_i) = g_0$.  Then $\{ G_i \}_{i=1}^{\infty}$ contains a subsequence $\{ G_{i_{j}} \}$ where 
\begin{enumerate}
\item the dimension of $G_{i_{j}}$ strictly increase to $\infty$ as $j \rightarrow \infty$ or,
\item there are infinitely many $G_{i_{j}}$ with the same dimension. 
\end{enumerate}
We will now show that both cases are impossible, giving the desired contradiction.

For (1), let $\{ G_k \}$ be such a subsequence, where each $k$ designates the dimension of the corresponding grid graph. So, $k \rightarrow \infty$ by assumption. Note that, we can assume $k \geq 3$ here since any $k$-dimensional grid graph with $k \leq 2$ is planar. Now, each $G_k$  contains a subgraph isomorphic to $G(1, \ldots, 1)$ with $k$ ones  and Corollary \ref{cor:hypercuberange} implies that 
$$\gamma(G_k)  \geq \gamma(G(1, \ldots, 1)) = 1 + (k-4)2^{k-3}.$$

Set $f(k) = 1 + (k-4)2^{k-3}$. Since $\displaystyle\lim_{k \rightarrow \infty} f(k) = \infty$, there exists some $k'$ such that $g_{0} \leq f(k)$ for all $k  \geq k'$. This implies that any $k$-dimensional grid graph could not embed on $S_{g_{0}}$ for $k \geq k'$. This completes case (1). 

For (2), suppose we have a sequence of $k$-dimensional grid graphs, $\{G(\alpha_{j_1}, \ldots, \alpha_{j_k}) \}_{j=1}^{\infty}$ for some fixed $k \geq 3$. Then this sequence contains a subsequence where one of the $\alpha_{j_{m}}$ parameters heads to infinity as $j \rightarrow \infty$. Without loss of generality, assume this parameter is $\alpha_{j_{1}}$. Then each grid graph in this further subsequence contains a subgraph isomorphic to the $k$-dimensional grid graph $G(\alpha_{j_{1}}) = G(\alpha_{j_{1}}, 1, \ldots, 1)$. Then Corollary \ref{cor:Eulergirth} implies that

$$ \begin{aligned}  \gamma(G(\alpha))   &\geq  1 - (\alpha+1)2^{k-2}\Big[1- \frac{1}{2}(\frac{\alpha}{\alpha+1} + \frac{k-1}{2}) \Big] \\
 &= 1-2^{k-2}\Big[\alpha+1 - \frac{1}{2}(\alpha + \frac{(\alpha+1)(k-1)}{2}) \Big] \\
& = 1 - 2^{k-2} \Big[ \frac{3}{4}\alpha + \frac{5}{4} - \frac{k}{4}(\alpha-1)\Big] \\
&= 1 + 2^{k-2} \Big[ (\frac{k-3}{4})\alpha - \frac{5+k}{4} \Big].
\end{aligned} $$

Now, for any fixed $k > 3$, let $h(\al_{j_{1}}) = 1 + 2^{k-2} \Big[ (\frac{k-3}{4})\al_{j_{1}} - \frac{5+k}{4} \Big].$  Then we have that $\displaystyle\lim_{j \rightarrow \infty} h(\alpha_{j_{1}}) = \infty$, implying that (2) can not hold when the dimension is $k > 3$ following the same line of argument as case (1). Finally, for $k = 3$, since Lemma \ref{lem:Da11} implies that  $G(\alpha, 1, 1)$ is planar, we can conclude that any $3$-dimensional grid graph in this sequence must contain a subgraph isomorphic to $G(\alpha, 2, 1)$. However, Lemma \ref{lem:genuslowerbound} tells us that $\g(G(\al,2,1))  \geq  \floor*{\frac{\alpha}{2}}$. Since $\displaystyle\lim_{j \rightarrow \infty}\floor*{\frac{\alpha_{j_{1}}}{2}} = \infty$, we can again apply the same type of argument used in case (1). Thus, such a sequence is also impossible when $k=3$.
\end{proof}


\begin{acknowledgements}
 We would  like to thank Mark Ellingham and the referee for providing useful feedback on drafts of this paper. \end{acknowledgements}

\section*{Funding}
This work was financially supported by the Furman University Department of Mathematics  via the Summer Mathematics Undergraduate Research Fellowships.

\section*{Availability of data and materials}
Not applicable. 

\section*{Conflict of Interests}
The authors have no relevant financial or non-financial interests to disclose.

\bibliographystyle{hamsplain}
\bibliography{biblio}

\providecommand{\bysame}{\leavevmode\hbox to3em{\hrulefill}\thinspace}
\providecommand{\href}[2]{#2}
\begin{thebibliography}{10}

\bibitem{AbPa1983}
Mohammed Abu-Sbeih and T.~D. Parsons, \emph{Embeddings of bipartite graphs}, J.
  Graph Theory \textbf{7} (1983), no.~3, 325--334.

\bibitem{An1981}
Ian Anderson, \emph{Quadrilateral embeddings of bipartite graphs}, J. Graph
  Theory \textbf{5} (1981), no.~4, 355--361.

\bibitem{BHKY1962}
Joseph Battle, Frank Harary, Yukihiro Kodama, and J.~W.~T. Youngs,
  \emph{Additivity of the genus of a graph}, Bull. Amer. Math. Soc. \textbf{68}
  (1962), 565--568.

\bibitem{BeHa1965}
Lowell~W. Beineke and Frank Harary, \emph{The genus of the {$n$}-cube},
  Canadian J. Math. \textbf{17} (1965), 494--496.

\bibitem{Bo1978}
Andr\'{e} Bouchet, \emph{Orientable and nonorientable genus of the complete
  bipartite graph}, J. Combin. Theory Ser. B \textbf{24} (1978), no.~1, 24--33.

\bibitem{Sh2019}
Shouman Das, \emph{Genus of the hypercube graph and real moment-angle
  complexes}, Topology Appl. \textbf{258} (2019), 415--424.

\bibitem{Du1966}
Richard~A. Duke, \emph{The genus, regional number, and {B}etti number of a
  graph}, Canadian J. Math. \textbf{18} (1966), 817--822.

\bibitem{ImKlRa2008}
Wilfried Imrich, Sandi Klav\v{z}ar, and Douglas~F. Rall, \emph{Topics in graph
  theory}, A K Peters, Ltd., Wellesley, MA, 2008, Graphs and their Cartesian
  product.

\bibitem{MoPiSk1988}
Bojan Mohar, Toma\v{z} Pisanski, and Martin \v{S}koviera, \emph{The maximum
  genus of graph bundles}, European J. Combin. \textbf{9} (1988), no.~3,
  215--224.

\bibitem{Pi1980}
Toma\v{z} Pisanski, \emph{Genus of {C}artesian products of regular bipartite
  graphs}, J. Graph Theory \textbf{4} (1980), no.~1, 31--42.

\bibitem{Pi1982}
\bysame, \emph{Nonorientable genus of {C}artesian products of regular graphs},
  J. Graph Theory \textbf{6} (1982), no.~4, 391--402.

\bibitem{Pi1989}
\bysame, \emph{Orientable quadrilateral embeddings of products of graphs}, vol.
  109, 1992, Algebraic graph theory (Leibnitz, 1989), pp.~203--205.

\bibitem{Ri1955}
Gerhard Ringel, \emph{\"{U}ber drei kombinatorische {P}robleme am
  {$n$}-dimensionalen {W}\"{u}rfel und {W}\"{u}rfelgitter}, Abh. Math. Sem.
  Univ. Hamburg \textbf{20} (1955), 10--19.

\bibitem{Ri1965}
\bysame, \emph{Das {G}eschlecht des vollst\"{a}ndigen paaren {G}raphen}, Abh.
  Math. Sem. Univ. Hamburg \textbf{28} (1965), 139--150.

\bibitem{Wh1970}
Arthur~T. White, \emph{The genus of repeated cartesian products of bipartite
  graphs}, Trans. Amer. Math. Soc. \textbf{151} (1970), 393--404.

\bibitem{Yo1963}
J.~W.~T. Youngs, \emph{Minimal imbeddings and the genus of a graph}, J. Math.
  Mech. \textbf{12} (1963), 303--315.

\bibitem{Za1974}
Joseph Zaks, \emph{The maximum genus of {C}artesian products of graphs},
  Canadian J. Math. \textbf{26} (1974), 1025--1035.

\end{thebibliography}

\end{document}